
\documentclass[final,leqno,onefignum,onetabnum]{siamltex}

\usepackage{epsfig,epsf,fancybox}
\usepackage{amsmath}
\usepackage{mathrsfs}
\usepackage{amssymb}
\usepackage{amsfonts}
\usepackage{graphicx}
\usepackage{color}
\usepackage{stmaryrd}
\usepackage{multirow}
\usepackage{booktabs}
\usepackage{accents}
\usepackage{cite}
\usepackage{float}
\usepackage[ruled,vlined,linesnumbered]{algorithm2e}
\usepackage{mathtools}  
\usepackage{enumitem}
\usepackage{diagbox}

\usepackage{kantlipsum,setspace}


\usepackage{mathtools}

\usepackage[normalem]{ulem} 

\newcommand{\bm}[1]{\boldsymbol{#1}}

\newcommand{\va}{{\mathbf{a}}}
\newcommand{\vb}{{\mathbf{b}}}
\newcommand{\vc}{{\mathbf{c}}}
\newcommand{\vd}{{\mathbf{d}}}
\newcommand{\ve}{{\mathbf{e}}}

\newcommand{\vg}{{\mathbf{g}}}
\newcommand{\vh}{{\mathbf{h}}}

\newcommand{\vq}{{\mathbf{q}}}

\newcommand{\vs}{{\mathbf{s}}}

\newcommand{\vx}{{\mathbf{x}}}
\newcommand{\vy}{{\mathbf{y}}}
\newcommand{\vz}{{\mathbf{z}}}

\newcommand{\vA}{{\mathbf{A}}}
\newcommand{\vB}{{\mathbf{B}}}
\newcommand{\vC}{{\mathbf{C}}}
\newcommand{\vD}{{\mathbf{D}}}

\newcommand{\vH}{{\mathbf{H}}}
\newcommand{\vI}{{\mathbf{I}}}

\newcommand{\vQ}{{\mathbf{Q}}}


\newcommand{\cH}{{\mathcal{H}}}

\newcommand{\cL}{{\mathcal{L}}}

\newcommand{\cN}{{\mathcal{N}}}

\newcommand{\cX}{{\mathcal{X}}}

\newcommand{\vareps}{\varepsilon}


\newcommand{\EE}{\mathbb{E}} 
\newcommand{\RR}{\mathbb{R}} 
\newcommand{\vzero}{\mathbf{0}} 
\newcommand{\vone}{{\mathbf{1}}} 

\newcommand{\dist}{\mathrm{dist}}    
\newcommand{\Prob}{{\mathrm{Prob}}} 
\newcommand{\Proj}{{\mathrm{Proj}}} 




\newcommand{\st}{\mbox{ s.t. }}


\DeclareMathOperator*{\argmin}{arg\,min} 


\newcommand{\bc}{\begin{center}}
\newcommand{\ec}{\end{center}}

\newcommand{\bdm}{\begin{displaymath}}
\newcommand{\edm}{\end{displaymath}}

\newcommand{\beq}{\begin{equation}}
\newcommand{\eeq}{\end{equation}}

\newcommand{\bfl}{\begin{flushleft}}
\newcommand{\efl}{\end{flushleft}}

\newcommand{\bt}{\begin{tabbing}}
\newcommand{\et}{\end{tabbing}}

\newcommand{\beqn}{\begin{eqnarray}}
\newcommand{\eeqn}{\end{eqnarray}}

\newcommand{\beqs}{\begin{align*}} 
\newcommand{\eeqs}{\end{align*}}  




\newtheorem{remark}{Remark}[section]
\newtheorem{assumption}{Assumption}
\newtheorem{setting}{Setting}

\begin{document}

\title{Primal-dual stochastic gradient method for convex programs with many functional constraints\thanks{This work is partly supported by NSF grant DMS-1719549.}}

\author{Yangyang Xu\thanks{\url{xuy21@rpi.edu}. Department of Mathematical Sciences, Rensselaer Polytechnic Institute, Troy, New York.}
}

\date{\today}

\maketitle

\begin{abstract}
Stochastic gradient method (SGM) has been popularly applied to solve optimization problems with objective that is stochastic or an average of many functions. Most existing works on SGMs assume that the underlying problem is unconstrained or has an easy-to-project constraint set. In this paper, we consider problems that have a stochastic objective and also many functional constraints. For such problems, it could be extremely expensive to project a point to the feasible set, or even compute subgradient and/or function value of all constraint functions. To find solutions of these problems, we propose a novel { (adaptive)} SGM based on the classical augmented Lagrangian function. Within every iteration, it inquires a stochastic subgradient of the objective, and a subgradient and the function value of one randomly sampled constraint function. Hence, the per-iteration complexity is low. We establish its convergence rate for convex problems and also problems with strongly convex objective. It can achieve the optimal $O(1/\sqrt{k})$ convergence rate for convex case and nearly optimal $O\big((\log k)/k\big)$ rate for strongly convex case. Numerical experiments on a sample approximation problem of the robust portfolio selection and quadratically constrained quadratic programming are conducted to demonstrate its efficiency.

\vspace{0.3cm}

\noindent {\bf Keywords:} stochastic gradient method (SGM), adaptive learning, augmented Lagrangian method (ALM), functional constraint, iteration complexity
\vspace{0.3cm}

\noindent {\bf Mathematics Subject Classification:} 90C06, 90C25, 90C30, 68W40.

\end{abstract}

\section{Introduction}

In this paper, we consider the constrained stochastic program
\begin{equation}\label{eq:ccp}
\min_{\vx\in X} f_0(\vx)\equiv \EE_\xi [F_0(\vx;\xi)], \st \ f_j(\vx)\le 0, j=1,\ldots, M,
\end{equation}
where $X$ is a convex set in $\RR^n$, $\xi$ is a random variable, and $f_j$ is a convex function for each $j=0,1,\ldots,M$. All nonlinear optimization problems in $\RR^n$ can be formulated in the form of \eqref{eq:ccp}. We are particularly interested in the case that $M$ is a large number. 

To find a solution of \eqref{eq:ccp}, we aim at designing a novel primal-dual stochastic gradient method (SGM). We assume an oracle, which can return a stochastic approximation of a subgradient of $f_0$, and also the function value and a deterministic subgradient of each $f_j$ at any inquired point $\vx\in X$. Since $M$ is big, it would be computationally very expensive if at every update, we inquire the objective value and/or subgradient of all $f_j$'s. Based on this observation, our algorithm, at every iteration, will simply call the oracle to return subgradients and function values of a few sampled constraint functions. 

The algorithm is derived based on the classical augmented Lagrangian function (c.f. \cite{rockafellar1973multiplier, rockafellar1976augmented}) of an equivalent rescaled variant of \eqref{eq:ccp}, i.e., 
$$\cL_\beta(\vx,\vz)= f_0(\vx) + \Psi_\beta(\vx, \vz).$$
Here, $\beta>0$ is the penalty parameter, $\vz$ is the Lagrangian multiplier or dual variable,
\begin{equation}\label{eq:def-Psi}
\Psi_\beta(\vx, \vz) = \frac{1}{M}\sum_{j=1}^M \psi_\beta\big(f_j(\vx), z_j\big),
\end{equation}
and
\begin{equation}\label{eq:psi}
\psi_\beta(u,v)=\left\{\begin{array}{ll} uv+\frac{\beta}{2}u^2,& \text{ if }\beta u+v \ge 0,\\[0.15cm]
-\frac{v^2}{2\beta}, & \text{ if }\beta u+v < 0.
\end{array}\right.
\end{equation}
Note that $\Psi_\beta$ is convex in $\vx$ and concave in $\vz$. Given $\beta>0$, the augmented dual function is defined as 
\begin{equation}\label{eq:dual-fun}
d_\beta(\vz)=\min_{\vx\in X}\cL_\beta(\vx,\vz).
\end{equation}

At each iteration $k$, we first sample one constraint function $f_{j_k}$. Secondly we call the oracle to obtain a stochastic subgradient $\vg_0^k$ of $f_0$, and a subgradient $\tilde\nabla f_{j_k}(\vx^k)$ and the function value of $f_{j_k}$ at $\vx^k$. Let  
\begin{equation}\label{eq:subgrad-psi}
\vh^k=[\beta  f_{j_k}(\vx^k)+z_{j_k}^k]_+\tilde\nabla f_{j_k}(\vx^k).
\end{equation}
Then $\vg^k_0+\vh^k$ is a stochastic subgradient of $\cL_{\beta }$ with respect to $\vx$.
Thirdly we perform a projected stochastic subgradient update as in \eqref{eq:update-x} to the primal variable $\vx$, and finally we update dual variable $z_{j_k}$. 

\begin{algorithm}[h]
\caption{Primal-dual stochastic gradient (PDSG) method for \eqref{eq:ccp}}\label{alg:pd-sg}
\textbf{Initialization:} choose $\vx^1\in X$, $\vz^1=\vzero$, and $\beta>0$\;
\For{$k=1,2,\ldots$}{
Pick $j_k\in [M]$ uniformly at random\;
Call the oracle to return a stochastic subgradient $\vg_0^k$ of $f_0$ and subgradient and function value of $f_{j_k}$ at $\vx^k$\;  
Obtain $\vh^k$ in \eqref{eq:subgrad-psi}, choose $\vD_k\succ 0$, and update the primal variable $\vx$ by 
\begin{equation}\label{eq:update-x}
{ \vx^{k+1}=\Proj_{X} \left(\vx^k - \vD_k^{-1}(\vg_0^k + \vh^k)\right)}; \vspace{-0.1cm}
\end{equation}
Choose $0<\rho_k\le\beta$ and update the dual variable $\vz$ by 
\begin{equation}\label{eq:update-z}
z_j^{k+1}=\left\{\begin{array}{ll}z_j^k, & \text{ if }j\neq j_k\\[0.15cm]
z_j^k + \rho_k\cdot\max\left(-\frac{z_j^k}{\beta }, f_j(\vx^{k})\right), &\text{ if }j = j_k\end{array}\right.\vspace{-0.3cm}
\end{equation}
}
\end{algorithm}

The pseudocode of the proposed method is shown in Algorithm \ref{alg:pd-sg}, which iteratively performs { (adaptive)} stochastic subgradient update to the primal variable $\vx$ and randomized coordinate update to the dual variable $\vz$. { In order to have an easy update, $\vD_k$ will be set to a diagonal matrix for each $k$. We will consider two different settings of $\vD_k$ in our analysis. \begin{setting}\label{set:nonadp}
 $\vD_k=\frac{\vI}{\alpha_k}$, where $\alpha_k>0$ for all $k$, and $\vI$ is the identity matrix. 
\end{setting}

\begin{setting}\label{set:adp}
 $\vD_k=\diag(\vs^k)+\frac{\vI}{\alpha_k}$, where $\alpha_k>0$ and $\vs^k=\eta\sqrt{\sum_{t=1}^k\frac{(\vg_0^t+\vh^t)^2}{\gamma_t^2}}$ with $\eta>0$ and $\gamma_t=\max\big(1,\|\vg_0^t+\vh^t\|\big)$ for all $t$. Here, $\va^2$ and $\sqrt{\va}$ denote the componentwise square and square-root for a vector $\va$.
 \end{setting}

Note that in Setting \ref{set:adp}, $\vD_k$ is adaptive to the primal stochastic subgradient. We scale the subgradient  for technical reasons, and it is inspired by \cite{yu2017block-normalized}. With such a setting, Algorithm \ref{alg:pd-sg} is an adaptive primal-dual stochastic gradient method, and it appears to be the first one under the primal-dual setting. Although the same order of convergence rate will be shown for both settings, the adaptive one can numerically perform significantly better.

}

We remark that if the potential application has any affine equality constraint $\va^\top \vx = b$, we can always write it into two affine inequality constraints $\va^\top\vx \le b$ and $-\va^\top\vx \le -b$ and thus formulate the problem in the form of \eqref{eq:ccp}, or we can use a technique similar to that in \cite{xu2017-1st-method-alm} to handle the equality and inequality constraints simultaneously. Furthermore, instead of sampling one constraint function every time, we can sample a small set $J_k$ of constraint functions, and let
$$\vh^k=\frac{1}{|J_k|}\sum_{j\in J_k}[\beta  f_{j}(\vx^k)+z_{j}^k]_+\tilde\nabla f_{j}(\vx^k)$$
in the update \eqref{eq:update-x} and also update $z_j$ for all $j\in J_k$. All our convergence results can still be obtained.

\subsection{Motivating examples}
We give a few examples that can be written in the form of \eqref{eq:ccp} with a very big $M$, and our proposed algorithm can be applied.


\textbf{Stochastic linear programming.}
A two-stage stochastic linear program (c.f. \cite[Sec. 2.1]{shapiro2009lectures}) can be formulated as
\begin{equation}\label{eq:2sg-slp}
\min_\vx \vc^\top \vx + \EE\big[f_\xi(\vx)\big], \st \vA\vx\le\vb,
\end{equation}
where $\xi=(\vB,\vC,\vd, \vq)$ and $f_\xi(\vx)$ are respectively the data and the optimal value of the second stage linear program
\begin{equation*}
\min_\vy \vq^\top \vy, \st \vB\vx + \vC\vy \le \vd.
\end{equation*}
As there are $M$ scenarios in the second stage, i.e., $\xi\in \{\xi_1,\ldots,\xi_M\}$ with $\Prob(\xi=\xi_i)=p_i>0$ and $\sum_{i=1}^M p_i=1$, then $$\EE\big[f_\xi(\vx)\big]=\sum_{i=1}^M p_i f_{\xi_i}(\vx)=\sum_{i=1}^M p_i \min \big\{\vq_i^\top \vy: \vB_i\vx + \vC_i\vy \le \vd_i\big\}.$$ 
Hence, \eqref{eq:2sg-slp} can be written as a single large-scale linear program:
\begin{equation}\label{eq:1single-lp}
\begin{aligned}
\min_{\vx,\vy_1,\cdots,\vy_M} & \vc^\top\vx +\sum_{i=1}^M p_i \vq_i^\top \vy_i,\
\st  \vA\vx \le \vb, \ \vB_i\vx + \vC_i\vy_i \le \vd_i,\, i=1,\ldots, M.
\end{aligned}
\end{equation}
Clearly, \eqref{eq:1single-lp} is in the form of \eqref{eq:ccp}, and if there are many scenarios, i.e., $M$ is big, it could be extremely expensive to access all the data at every update to the variables.

\textbf{Chance constrained problems by sampling and discarding.} A nonlinear program with chance constraint is formulated as 
\begin{equation}\label{eq:chance-prob}
\min_{\vx\in X} f_0(\vx), \st \Prob\big(g(\vx;\xi) \le 0\big) \ge 1-\tau,
\end{equation}
where $X\subseteq \RR^n$ is a convex set, $\xi$ is an uncertain parameter on a support set $\Xi$, and $\tau$ is a user-specified risk level of constraint violation. Even though $g(\cdot\,;\xi)$ is convex for any $\xi\in\Xi$, the chance constraint set in \eqref{eq:chance-prob} may not be convex. Hence, exactly solving \eqref{eq:chance-prob} is hard in general. To numerically solve \eqref{eq:chance-prob}, the work \cite{campi2011sampling} introduces a sample-based approximation method, called \emph{sampling and discarding} approach. This method makes $N$ independent samples of $\xi$, then eliminates $p$ of them, and solves a deterministic problem with the remaining $M=N-p$ constraints, i.e., 
\begin{equation}\label{eq:chance-prob-smpl}
\min_{\vx\in X} f_0(\vx), \st g(\vx;\xi_i) \le 0, \forall i=1,\ldots,M, 
\end{equation}
where $\{\xi_1,\ldots,\xi_M\}$ contains the $M$ samples after discarding. It is shown that under certain assumptions, for any $\vareps\in (0,1)$, if 
\begin{equation}\label{eq:cond-Np}\left(\begin{array}{c}p+n-1\\ p\end{array}\right)\sum_{i=0}^{p+n-1}\left(\begin{array}{c}N\\ i\end{array}\right)\tau^i(1-\tau)^{N-i}\le \vareps,
\end{equation}
the solution of \eqref{eq:chance-prob-smpl} is feasible for \eqref{eq:chance-prob} with probability at least $1-\vareps$.

Note that if no \emph{discarding} is performed, the above method is similar to the scenario approximation approaches in \cite{nemirovski2006scenario, luedtke2008sample}. For high-dimensional problems, i.e., $n$ is big, it is required to set a significantly bigger $N$ and also $N-p$ to have \eqref{eq:cond-Np}. Therefore, the sample-based approximation problem \eqref{eq:chance-prob-smpl} will have many functional constraints and be in the form of \eqref{eq:ccp}.

\textbf{Robust optimization by sampling.} Different from the chance constrained problem \eqref{eq:chance-prob}, robust optimization requires the constraint $g(\vx;\xi)\le 0$ to be satisfied for any $\xi\in \Xi$, i.e., 
\begin{equation}\label{eq:robust-prob}
\min_{\vx\in X} f_0(\vx), \st g(\vx;\xi) \le 0,\, \forall \xi\in \Xi.
\end{equation}
Similar to the scenario approximation method for chance constrained problems, the sampling approach (e.g., \cite{calafiore2005uncertain}) has also been proposed to numerically solve \eqref{eq:robust-prob}. Let $\{\xi_1,\ldots,\xi_M\}$ be $M$ independently extracted samples. It is shown in \cite{calafiore2005uncertain} that for any $\tau\in (0,1)$ and any $\vareps\in (0,1)$, if the number of samples satisfies 
$M\ge \frac{n}{\tau \vareps} -1,$
then the solution to \eqref{eq:chance-prob-smpl} will be a $\tau$-level robustly feasible solution with probability at least $1-\vareps$. If $n$ is big, and high feasibility level and high probability are required, then $M$ would be a very big number, and thus \eqref{eq:chance-prob-smpl} has an extremely big number of functional constraints. 


\subsection{Existing methods}
In this subsection, we review a few existing methods that could potentially be applied to solve \eqref{eq:ccp} and show how our method relates to them. Some of these methods are primal-dual type as our method, and others are purely primal methods.

\textbf{Stochastic mirror-prox method.} The proposed method is closely related to the stochastic mirror-prox method \cite{juditsky2011solving, baes2013randomized} for saddle-point problems or more generally for variational inequality (VI) problems. By the augmented Lagrangian function, one can equivalently formulate \eqref{eq:ccp} into the following saddle-point problem (c.f., \cite{rockafellar1973dual}):
\begin{equation}\label{eq:sdl-prob}
\min_{\vx\in X}\max_{\vz} \cL_\beta(\vx,\vz).
\end{equation}
Assuming $\nabla \cL_\beta$ to be Lipschitz continuous and $\vz$ in a compact set $Z$, then we can apply the method in \cite{baes2013randomized} to the above saddle-point problem and have the update:\footnote{Here, we use the Euclidean norm square as the proximal term, while \cite{baes2013randomized} actually uses a more general Bregman distance function.}
\begin{subequations}\label{eq:sd-update}
\begin{align}
%
(\hat\vx^k,\hat{\vz}^k)=&~\Proj_{X\times Z}\Big(\big(\vx^k-\alpha_k\vg^k_x, \vz^k-\alpha_k\vg^k_z\big)\Big),\label{eq:sd-update-hat}\\
(\vx^{k+1},\vz^{k+1})=&~\Proj_{X\times Z}\Big(\big(\vx^k -\alpha_k\hat{\vg}^k_x,\vz^k-\alpha_k\hat{\vg}^k_z\big)\Big),\label{eq:sd-update-2}
\end{align}
\end{subequations}
where $(\vg^k_x,\vg^k_z)$ and $(\hat\vg^k_x,\hat\vg^k_z)$ are stochastic approximation of $\nabla\cL_\beta$ at $(\vx^k,\vz^k)$ and $(\hat\vx^k,\hat\vz^k)$. The above update performs two stochastic gradient (SG) projections. To have convergence, it seems to be required for VI problems. However, for saddle-point problems, \cite{nemirovski2009robust} shows that one SG projection is sufficient for convergence guarantee, namely, simply set $(\hat\vx^k,\hat\vz^k)=(\vx^k,\vz^k)$ and then obtain $(\vx^{k+1},\vz^{k+1})$ by \eqref{eq:sd-update-2}.

The methods in \cite{baes2013randomized} and \cite{nemirovski2009robust} both require the dual variable to be in a compact set for convergence guarantee. Generally, it is difficult to estimate a valid bound on the dual variable, especially for a stochastic program. 
In addition, at each iteration, they use the same step size for both primal and dual variable update, which seems to be required in their analysis. On the contrary, 
we will not assume boundedness of $\vz$ but instead we can prove the boundedness of the sequence $\{\vz^k\}$ in expectation. Furthermore, we allow to use different step sizes, and { this is crucial for the convergence analysis of our adaptive method.} 

The SGM for saddle-point problems is also studied in \cite{palaniappan2016stochastic}. However, it requires strong convexity for both primal and dual variables. { For bilinear convex-concave saddle-point problems, the authors of \cite{chen2014optimal} give an optimal primal-dual SGM. Without assuming boundedness on either primal or dual variables, they show an $O(1/\sqrt {k})$ convergence rate in terms of a perturbed primal-dual gap, c.f. \cite[Corollary 3.4]{chen2014optimal}. Applying their method, i.e., \cite[Algorithm 3]{chen2014optimal}, to an affinely constrained convex problem, one can show that if the primal variable and the output dual iterate are bounded, then the convergence rate is $O(1/\sqrt{k})$ in terms of both primal-dual objective gap and feasibility violation.}

\textbf{Cooperative stochastic approximation.} The problem \eqref{eq:ccp} can also be equivalently formulated as a stochastic program with a single finite-sum constraint:
\begin{equation}\label{eq:ccp-csa}
\min_{\vx\in X} f_0(\vx), \st \frac{1}{M}\sum_{j=1}^M [f_j(\vx)]_+ \le 0,
\end{equation} 
and we can apply the cooperative stochastic approximation (CSA) method in \cite{lan2016algorithms-exp-cont} to find an approximate solution. At each iteration $k$, CSA first samples one constraint function $f_{j_k}$ and check its value at the iterate $\vx^k$. If $f_{j_k}(\vx^k)\ge \eta_k$, set $\vg^k= \tilde{\nabla} f_{j_k}(\vx^k)$, and otherwise, set $\vg^k$ to an unbiased estimate of $\tilde{\nabla}f_0(\vx^k)$, where $\eta_k>0$ is a parameter to control constraint violation. Then it updates the iterate by
\begin{equation}\label{eq:csa-update}
\vx^{k+1}=\Proj_X(\vx^k-\alpha_k \vg^k),
\end{equation}
where $\alpha_k$ is a step size.

For convex problems, CSA is shown to enjoy $O(1/\sqrt{k})$ convergence rate in terms of both objective and feasibility. The order can be improved to $O(1/k)$ if both the objective and constraint functions in \eqref{eq:ccp-csa} are strongly convex. We will show that the proposed algorithm can enjoy the same order of convergence rate for convex problems. To have an improved rate of $O\big((\log k)/ k\big)$, we need strong convexity of the objective function but only convexity on the constraint functions. However, we need an additional assumption on the existence of a primal-dual solution. { Hence, our method has better convergence rate for the problem with a strongly convex objective but only convex constraint functions, such as finding the projection onto the intersection of many polyhedral sets \cite{pang2015set-intersection, stovsic2016projection}.}

\textbf{Stochastic subgradient with random constraint projection.}
Let $X_0=X$ and 
\begin{equation}\label{eq:def-Xj}
X_j=\{\vx\in\RR^n: f_j(\vx)\le 0\}, \,  j=1,\ldots,M.
\end{equation} Then \eqref{eq:ccp} can be written to
\begin{equation}\label{eq:ccp-proj}
\min_\vx f_0(\vx), \st \vx \in \cX=\cap_{j=0}^M X_j.
\end{equation} 
On solving the above problem, we can apply the method in \cite{wang2015random-multi-proj, wang2016stochastic-proj} and iteratively perform the update:
\begin{equation}\label{eq:alg-proj}
\vx^{k+1}=\Proj_{X_{j_k}}\left(\vx^k-\alpha_k\vg_0^k\right),
\end{equation} 
where $j_k$ is randomly chosen from $\{0,1,\ldots,M\}$, $\Proj_{X_j}$ denotes the projection onto $X_j$, and $\vg_0^k$ is a stochastic approximation of a subgradient of $f_0$ at $\vx^k$. Various sampling schemes on $j_k$ are studied in \cite{wang2016stochastic-proj}. Under the linear regularity assumption on the set collection $\{X_j\}_{j=0}^M$, a sublinear convergence result is established. If $f_0$ is convex, the rate is $O(1/\sqrt{k})$ in terms of objective error { $|f_0(\vx^k)-f_0^*|$ and $O\big((\log k)/k\big)$ in terms of constraint violation $[\dist(\vx^k, \cX)]^2$. In \cite{wang2015random-multi-proj}, the rate of constraint violation is improved to $O(1/k)$. Furthermore, if $f_0$ is strongly convex, \cite{wang2015random-multi-proj} shows the convergence rate $O((\log k)/k)$ in terms of objective error and $O(1/k^2)$ of constraint violation.} To have efficient computation in the update \eqref{eq:alg-proj}, $X_j$ is  required to be a simple set for each $j=0,1,\ldots,M$. Hence, if $\Proj_{X_j}$'s are difficult to evaluate, such as the logistic loss function induced constraint set in the Neyman-pearson classification problem \cite{rigollet2011neyman}, the method in \cite{wang2015random-multi-proj, wang2016stochastic-proj} will be inefficient. By contrast, our update in \eqref{eq:update-x} can be computed efficiently as long as $X$ is simple. 

\textbf{Stochastic proximal-proximal gradient method.} Let $r(\vx)=\iota_X(\vx)$ and $g_j(\vx) = \iota_{X_j}(\vx)$, where $\iota_X$ denotes the indicator function on $X$, and $X_j$'s are defined in \eqref{eq:def-Xj}. Then \eqref{eq:ccp} is equivalent to
\begin{equation}\label{eq:sppg-form}
\min_\vx r(\vx) + \frac{1}{M}\sum_{j=1}^M \left(f_0(\vx)+g_j(\vx)\right).
\end{equation}
When $f_0$ is differentiable, the stochastic proximal-proximal gradient (S-PPG) method \cite{ryu2017ppg} can be applied to find a solution of \eqref{eq:sppg-form}. It starts from $(\vx^0,\vz_1^0,\ldots,\vz_M^0)$ and iteratively performs the update:
\begin{equation}\label{eq:sppg-update}
\begin{aligned}
&\vx^{k+\frac{1}{2}}=\Proj_X\left(\frac{1}{M}\sum_{j=1}^M \vz_j^k\right),\\
&\vx^{k+1}=\Proj_{X_{j_k}}\left(2\vx^{k+\frac{1}{2}}-\vz_{j_k}^k-\alpha \nabla f_0(\vx^{k+\frac{1}{2}}\right),\\
& \vz_j^{k+1}=\left\{\begin{array}{ll}\vz_j^k+\vx^{k+1}-\vx^{k+\frac{1}{2}}, &\text{ if }j=j_k\\[0.1cm]
\vz_j^k, &\text{ if }j\neq j_k\end{array}\right.
\end{aligned}
\end{equation}
where $j_k$ is chosen from $\{1,\ldots,M\}$ uniformly at random.
Since $\Proj_{X_{j_k}}$ needs be evaluated, S-PPG has the same issue as the update in \eqref{eq:alg-proj}. However, it could be more suitable in a distributed system, for which communication cost is a main concern.

\textbf{Stochastic subgradient with single projection.} Let $h(\vx)=\max_{1\le j\le M} f_j(\vx)$. Then \eqref{eq:ccp} is equivalent to 
\begin{equation}\label{eq:signle-proj}
\min_{\vx\in X} f_0(\vx), \st h(\vx)\le 0.
\end{equation}
For solving the above problem, we can apply the method in \cite{mahdavi2012stochastic}, which, at every iteration, inquires a stochastic subgradient of $f_0$ and also a subgradient of $h$. Although the method in \cite{mahdavi2012stochastic} only needs to perform a single projection to the feasible set at the last step, computing the subgradient of $h$ would generally require evaluating the function value of all $f_j$'s, and thus it is inefficient for the big-$M$ case. This issue is partly addressed in \cite{cotter2016light-touch}, which only checks a batch of randomly sampled constraint functions at every iteration. However, depending on the underlying problem and required accuracy, the batch size could be as large as $M$.

\textbf{Deterministic primal-dual first-order method.}
Other related methods are the deterministic primal-dual first-order algorithms in the author's previous works \cite{xu2017-1st-method-alm, xu2017ialm}. {  Although \cite{xu2017-1st-method-alm, xu2017ialm} also use the classic augmented Lagrangian function, their algorithm design and targeted applications are fundamentally different from those in this paper.} The methods in \cite{xu2017-1st-method-alm, xu2017ialm} assumes differentiability of $f_j$'s, and it requires exact gradient of $f_0$ and uses all $f_j,\, j=1,\ldots, M$ to update $\vx$ and $\vz$. Hence, if exact gradient of $f_0$ is not available or very expensive to compute, or if $M$ is extremely big, the deterministic methods are either inapplicable or inefficient. { In addition, the update to $\vx$ and $\vz$ in Algorithm \ref{alg:pd-sg} is Jacobi-type while \cite{xu2017-1st-method-alm, xu2017ialm} and all existing works about deterministic augmented Lagrangian method update the primal and dual variables in a Gauss-Seidel manner. Furthermore, due to the stochasticity, the analysis of this paper is fundamentally different and more complicated than that in \cite{xu2017-1st-method-alm, xu2017ialm}.} Similarly, the deterministic first-order methods in \cite{yu2016primal, yu2017primal-composite, lin2018level-set} are also very expensive or do not apply for the stochastic program with many constraints. 

Besides the above reviewed methods, in the literature there are also other methods that can be applied to \eqref{eq:ccp} such as the penalty method with stochastic approximation \cite{lan2016algorithms-exp-cont}. Exhausting all the existing methods is impossible. We refer the interested readers to the papers above and the references therein. 

\subsection{Contributions}
The main contributions are listed below.
\begin{itemize}[leftmargin=*]
\item We propose a novel { (adaptive)} primal-dual SGM for solving stochastic programs with many functional constraints. The method is derived based on the classical augmented Lagrangian function. Through a stochastic oracle, it alternatingly performs stochastic subgradient update to the primal variable and randomized coordinate update to the dual variable. At each iteration, it only needs to sample one out of many constraint functions and thus has low per-iteration complexity. 

\item We establish convergence rate results of the proposed method for convex problems and also problems with strongly convex objective. Different from existing analysis of primal-dual SGM for saddle-point problems, we do not assume the boundedness of the dual variable $\vz$, but instead we prove the boundedness of the dual iterate in expectation. For convex problems, we show that the algorithm can achieve the optimal $O(1/\sqrt{k})$ convergence rate, and for problems with strongly convex objective, we show that it can achieve $O\big((\log k)/k\big)$ convergence rate, where $k$ is the number of subgradient inquiries. All convergence rate results are in terms of primal and/or dual objective value and also primal constraint violation. For the strongly convex case, the $\log k$ factor can be removed if the dual iterate sequence is assumed to be bounded; see Remark \ref{rm:scvx-rate}. To the best of our knowledge, no existing work has established $O(1/k)$ convergence rate result for a primal-dual SGM by assuming strong convexity only on the primal objective function, even if the dual variable is restricted in a bounded set. The CSA method in \cite{lan2016algorithms-exp-cont} is a primal SGM, and it has $O(1/k)$ convergence rate if both the objective and constraint functions are strongly convex. 

\item We show the practical performance of the proposed algorithm by testing it on solving a sample approximation problem of the robust portfolio selection and convex quadratically constrained quadratic programs. The numerical results demonstrate that the proposed primal-dual SGM can be significantly better than the stochastic mirror-prox algorithm in \cite{baes2013randomized} and the CSA method in \cite{lan2016algorithms-exp-cont}.
\end{itemize}

\subsection{Notation and outline}
We use bold lower-case letters $\vx,\vz,\ldots$ for vectors and $x_i, z_i,\ldots$ for their $i$-th components. The bold number $\vzero$ and $\vone$ denote the all-zero and all-one vectors, respectively. $[M]$ is short for the set $\{1,2,\ldots,M\}$, $[a]_+=\max(0,a)$ and $[a]_-=\max(0,-a)$ respectively denote the positive and negative parts of a real number $a$. Given a symmetric positive semidefinite matrix $\vD$, $\|\vx\|_\vD$ is defined as $\sqrt{\vx^\top\vD\vx}$. We use $\|\vx\|$ to denote the Euclidean norm of a vector $\vx$. For two vectors $\vx$ and $\vy$ of the same size, $\vx\odot\vy$ denotes their componentwise product. 
For a convex function $f$, we denote by $\tilde{\nabla}f(\vx)$ a subgradient of $f$ at $\vx$, and the set of all subgradients of $f$ at $\vx$ is called the subdifferential of $f$, denoted by $\partial f(\vx)$. For a closed convex set $X$, $\Proj_X$ denotes the projection operator onto $X$. We let $\cH^k$ contain the history of Algorithm \ref{alg:pd-sg} until $(\vx^k,\vz^k)$, i.e.,
 $\cH^k=\big\{\vx^1, \vz^1, \vx^2,\vz^2 \ldots, \vx^{k}, \vz^{k}\big\}
 .$
$\EE[\zeta]$ denotes the expectation of a random variable $\zeta$, and $\EE[\zeta\,|\,\xi]$ is for the expectation of $\zeta$ conditional on $\xi$. 
In addition, we denote 
 \begin{equation}\label{eq:def-Phi}
 \textstyle\Phi(\bar{\vx}; \vx,\vz) = f_0(\bar{\vx})-f_0(\vx) + \frac{1}{M}\sum_{j=1}^M z_j f_j(\bar{\vx}).
 \end{equation}
 
The rest of the paper is outlined as follows. In section \ref{sec:assump}, we give the technical assumptions required in our analysis, and in section \ref{sec:analysis}, we analyze the algorithm with nonadaptive setting and show its convergence rate results. The convergence rate result of the algorithm with adaptive setting is given in section \ref{sec:analysis-adp}. Numerical results are provided in section \ref{sec:numerical}, and finally section \ref{sec:conclusion} concludes the paper.

\section{Technical assumptions}\label{sec:assump}

Throughout our analysis, we make the following assumptions. 

\begin{assumption}\label{assump:kkt}
There exists a primal-dual solution $(\vx^*, \vz^*)$ satisfying the Karush-Kuhn-Tucker (KKT) conditions:
\begin{subequations}\label{eq:kkt-conds}
\begin{align}
&\vzero\in \partial f_0(\vx^*)+\cN_X (\vx^*)+\frac{1}{M}\sum_{j=1}^M z_j^* \partial f_j(\vx^*),\label{eq:df}\\
&\vx^*\in X,\ f_j(\vx^*)\le 0, \forall j\in [M],\label{eq:pf}\\
& z_j^*\ge 0, \, z_j^* f_j(\vx^*) = 0, \forall j\in [M],\label{eq:cs}
\end{align}
\end{subequations}
where $\cN_X(\vx)$ denotes the normal cone of $X$ at $\vx$.
\end{assumption}

\begin{assumption}\label{assump:bd}
The SG approximation $\vg_0^k$ is unbiased and bounded, i.e., there is a constant $\sigma > 0$ such that
$$\EE\big[\vg_0^k\,\big |\, \cH^k\big] \in \partial f_0(\vx^k),\quad \EE\big[\|\vg_0^k\|^2\,\big |\, \cH^k\big] \le \sigma^2, \,\forall k.$$ 
In addition, there exist constants $F$ and $G$ such that
$$|f_j(\vx)|\le F, \, \|\tilde{\nabla} f_j(\vx)\| \le G, \,\forall\, \tilde{\nabla} f_j(\vx) \in \partial f_j(\vx),\,\forall j\in [M],\, \forall \vx\in X.$$
\end{assumption}

\begin{assumption}\label{assump:cvx}
For each $j=0,1,\ldots,M$, $f_j$ is a closed convex function on $X$. In addition, $f_0$ is $\mu$-strongly convex, i.e., 
\begin{equation}\label{eq:str-cvx-ineq}
f_0(\vy)\ge f_0(\vx) + \langle \tilde{\nabla}f_0(\vx), \vy-\vx\rangle +\frac{\mu}{2}\|\vy-\vx\|^2, \forall \vx, \vy\in X.
\end{equation}
\end{assumption}

Assumption \ref{assump:kkt} is satisfied if a certain constraint qualification holds such as the Slater's condition \cite{bazaraa2013nonlinear}. In Assumption \ref{assump:bd}, the unbiasedness and boundedness assumption on $\vg_0^k$ is standard in the literature of SGM, and the boundedness of each $f_j$ and $\tilde{\nabla} f_j$ is satisfied if $X$ is bounded. In Assumption \ref{assump:cvx}, if $\mu=0$, then $f_0$ is simply a convex function.

As the KKT conditions in \eqref{eq:kkt-conds} hold, there are $\tilde{\nabla}f_j(\vx^*),\,\forall j\in [M]$ such that
$$\textstyle-\frac{1}{M}\sum_{j=1}^M z_j^* \tilde{\nabla}f_j(\vx^*)\in\partial f_0(\vx^*)+\cN_X(\vx^*).$$
Hence, from the convexity of $f_0$ and $X$, it follows that
\begin{equation}\label{eq:kkt-ineq-pf}
\textstyle f_0(\vx) \ge f_0(\vx^*) - \left\langle\frac{1}{M}\sum_{j=1}^M z_j^* \tilde{\nabla}f_j(\vx^*), \vx-\vx^*\right\rangle,\,\forall \vx\in X.
\end{equation}
Since $z_j^*\ge 0$ and $f_j$ is convex for each $j\in [M]$, we have
$$z_j^*\big(f_j(\vx)-f_j(\vx^*)\big)\ge \langle z_j^* \tilde{\nabla}f_j(\vx^*), \vx-\vx^* \rangle.$$
The above inequality together with \eqref{eq:kkt-ineq-pf} and the fact $z_j^*f_j(\vx^*) = 0,\,\forall j\in [M]$ implies
\begin{equation}\label{eq:opt-cond}
\Phi(\vx;\vx^*,\vz^*)=f_0(\vx)-f_0(\vx^*)+\frac{1}{M}\sum_{j=1}^M z_j^* f_j(\vx) \ge 0, \forall \vx\in X.
\end{equation}

Furthermore, note that for any $\beta>0$, it holds $[\beta f_j(\vx^*)+z_j^*]_+ = z_j^*,\,\forall j\in [M]$, and thus \eqref{eq:df} exactly means $\vzero\in\partial_\vx \cL_\beta(\vx^*,\vz^*)+\cN_X(\vx^*)$. Hence, $\vx^*$ is a solution of $\min_{\vx\in X}\cL_\beta(\vx,\vz^*)$, which indicates 
$d_\beta(\vz^*)=\cL_\beta(\vx^*,\vz^*).$ From the definitions of $\Psi_\beta$ and $\psi_\beta$ in \eqref{eq:def-Psi} and \eqref{eq:psi}, and also \eqref{eq:pf} and \eqref{eq:cs}, it is straightforward to have $\Psi_\beta(\vx^*,\vz^*)=0$. Therefore,
\begin{equation}\label{eq:str-dual}
d_\beta(\vz^*)=f_0(\vx^*),
\end{equation}
i.e., the strong duality holds, and $\vx^*$ and $\vz^*$ are primal and dual optimal solutions.

\section{Convergence analysis of the nonadaptive method}\label{sec:analysis}
For ease of understanding, we first analyze the convergence of Algorithm \ref{alg:pd-sg} with the nonadaptive Setting \ref{set:nonadp}. Under Assumptions \ref{assump:kkt} through \ref{assump:cvx}, we show that for convex problems, our method can achieve the optimal convergence rate $O(1/\sqrt{k})$, and for problems with strongly convex objective, it can achieve a near-optimal rate $O((\log k)/k)$, where $k$ is the number of iterations. While existing analysis \cite{nedic2009subgradient, baes2013randomized} for saddle-point problems assumes the boundedness of the dual variable, we do not require such an assumption. Instead we can bound all $\vz^k$ in expectation by choosing appropriate parameters.  In addition, we do not find any existing work that has shown $O((\log k)/k)$ rate for a primal-dual SGM by assuming strong convexity on the primal objective.  
 
 \subsection{Preliminary results}
 We first establish a few preliminary results. 
The lemma below can be directly verified from the definition of $\Psi_\beta$.
\begin{lemma}\label{lem:psi-neg}
Let $\beta>0$. Then for any $\vx\in X$ such that $f_j(\vx)\le 0, \,\forall j\in [M]$ and any $\vz\ge \vzero$, it holds $\Psi_\beta(\vx, \vz) \le 0$.
\end{lemma}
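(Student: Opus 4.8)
The plan is to reduce the statement to a pointwise inequality: it suffices to show $\psi_{\beta}\big(f_i(\vx), z_i\big) \le 0$ for every $i\in[m]$ (for any penalty $\beta>0$, in particular $\beta=\beta_k$), since then averaging over $i$ as in \eqref{eq:def-Psi} gives $\Psi_{\beta}(\vx,\vz)\le 0$. For the second branch of the definition \eqref{eq:psi} this is immediate: if $\beta f_i(\vx)+z_i<0$, then $\psi_{\beta}\big(f_i(\vx),z_i\big)=-\frac{z_i^2}{2\beta}\le 0$ because $\beta>0$.

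For the first branch I would first put $\psi_\beta$ in a single closed form. A one-line computation ($\frac{1}{2\beta}(\beta u+v)^2 = uv+\frac{\beta}{2}u^2+\frac{v^2}{2\beta}$) shows that for all $u,v$,
\[
\psi_{\beta}(u,v)=\frac{1}{2\beta}\Big(\big[\beta u+v\big]_+^2-v^2\Big),
\]
which matches $uv+\frac{\beta}{2}u^2$ when $\beta u+v\ge 0$ and $-\frac{v^2}{2\beta}$ otherwise. Now set $u=f_i(\vx)\le 0$ and $v=z_i\ge 0$; then $\beta u+v\le v$, and since $v\ge 0$ we get $0\le[\beta u+v]_+\le v$, hence $[\beta u+v]_+^2\le v^2$ and therefore $\psi_{\beta}\big(f_i(\vx),z_i\big)\le 0$. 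Summing over $i$ completes the proof.

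There is essentially no obstacle here; the only mild subtlety is that in the first branch the expression $f_i(\vx)z_i+\frac{\beta}{2}f_i(\vx)^2$ is not nonpositive term by term (the quadratic part is nonnegative), so one must either complete the square as above or, equivalently, factor it as $f_i(\vx)\big(z_i+\frac{\beta}{2}f_i(\vx)\big)$ and note that $\beta f_i(\vx)+z_i\ge 0$ together with $f_i(\vx)\le 0$ forces $z_i+\frac{\beta}{2}f_i(\vx)\ge -\frac{\beta}{2}f_i(\vx)\ge 0$, so the product of the nonpositive factor $f_i(\vx)$ and the nonnegative factor is $\le 0$.
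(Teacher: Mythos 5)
Your proof is correct and is exactly the ``direct verification from the definition of $\psi$'' that the paper leaves to the reader: the closed form $\psi_\beta(u,v)=\frac{1}{2\beta}\big([\beta u+v]_+^2-v^2\big)$ together with $0\le[\beta u+v]_+\le v$ when $u\le 0$, $v\ge 0$ settles both branches at once, and averaging over $i$ gives $\Psi_\beta(\vx,\vz)\le 0$. No gaps.
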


The next lemma is important to establish the convergence rate of our algorithm. Similar ones { in a deterministic form} have appeared in \cite{xu2017ialm, xu2017-1st-method-alm}. 

\begin{lemma}\label{lem:pre-rate}
Let $\bar{\vx}\in X$ and $\bar \vz$ be random vectors, and let $\vareps_1\ge0$ and $\vareps_2\ge0$ be scalars. If for any $\vx\in X$ and $\vz\ge \vzero$ that may depend on $(\bar{\vx},\bar \vz)$, it holds
\begin{equation}\label{eq:ergo-0}
\EE\left[f_0(\bar\vx)+\frac{1}{M}\sum_{j=1}^M z_j f_j(\bar\vx)\right]\le \EE\big[f_0(\vx)+\Psi_\beta(\vx,\bar\vz)\big] + \vareps_1 + \vareps_2 \EE\|\vz\|^2,
\end{equation}
then for any $(\vx^*,\vz^*)$ satisfying \eqref{eq:kkt-conds}, 
\begin{align}
&\EE\big|f_0(\bar{\vx})-f_0(\vx^*)\big|\le 2\vareps_1+9\vareps_2 \|\vz^*\|^2,\label{eq:bd-obj}\\
&\EE\left[\frac{1}{M}\sum_{j=1}^M [f_j(\bar{\vx})]_+\right] \le \vareps_1 + \vareps_2 \|\vone+\vz^*\|^2,\label{eq:bd-res}\\
&\EE\big[d_\beta(\vz^*)-d_\beta(\bar\vz)\big] \le \frac{3}{2}\big(\vareps_1+3\vareps_2\|\vz^*\|^2\big).\label{eq:bd-dual}
\end{align}
\end{lemma}
\begin{proof}
Let $\vx=\vx^*$ in \eqref{eq:ergo-0} and recall the definition of $\Phi$ in \eqref{eq:def-Phi}. Then by Lemma \ref{lem:psi-neg}, we have 
\begin{equation}\label{eq:ergo}
\EE\big[\Phi(\bar{\vx};\vx^*,\vz)\big] \le \vareps_1 + \vareps_2 \EE\|\vz\|^2.
\end{equation}

Since $-z_j^* f_j(\bar{\vx}) \ge -z_j^* [f_j(\bar{\vx})]_+$, we have from \eqref{eq:opt-cond} that
\begin{equation}\label{eq:bd-diff-f0}
f_0(\bar{\vx})-f_0(\vx^*) \ge -\frac{1}{M}\sum_{j=1}^M z_j^* [f_j(\bar{\vx})]_+.
\end{equation}
We obtain the inequality in \eqref{eq:bd-res}, by substituting the above inequality into \eqref{eq:ergo} with $\vz$ given by $z_j=1+z_j^*$ if $f_j(\bar{\vx}) >0$ and $z_j=0$ otherwise for any $j\in [M]$.

Letting $z_j = 3z^*_j$ if $f_j(\bar{\vx})>0$ and $z_j=0$ otherwise for each $j\in [M]$ in \eqref{eq:ergo} and adding \eqref{eq:bd-diff-f0} together gives
\begin{equation}\label{eq:sum-z-fi}
\EE\left[\frac{1}{M}\sum_{j=1}^M z_j^* [f_j(\bar{\vx})]_+\right]\le \frac{\vareps_1}{2}+\frac{9\vareps_2}{2}\|\vz^*\|^2.
\end{equation}
Hence, by the above inequality and \eqref{eq:bd-diff-f0}, we obtain 
$
\EE\big[f_0(\bar{\vx})-f_0(\vx^*)\big]_-\le \frac{\vareps_1}{2}+\frac{9\vareps_2}{2}\|\vz^*\|^2.
$
In addition, from \eqref{eq:ergo} with $\vz=\vzero$, it follows $\EE[f_0(\bar{\vx})-f_0(\vx^*)]\le \vareps_1$. Since $|a|= a+2[a]_-$ for any real number $a$, we have 
$$\EE\big|f_0(\bar{\vx})-f_0(\vx^*)\big| = \EE[f_0(\bar{\vx})-f_0(\vx^*)]+2\EE\big[f_0(\bar{\vx})-f_0(\vx^*)\big]_-\le 2\vareps_1+9\vareps_2\|\vz^*\|^2,$$
which gives \eqref{eq:bd-obj}.

Furthermore, in \eqref{eq:ergo-0}, let $\vz=\vzero$ and take 
$\vx\in\argmin_{\vx\in X} f_0(\vx)+\Psi_\beta(\vx,\bar\vz).$ We have
$\EE f_0(\bar\vx)\le \EE d_\beta(\bar\vz)+\vareps_1,$ which together with \eqref{eq:bd-diff-f0}, \eqref{eq:sum-z-fi}, and \eqref{eq:str-dual} gives the inequality in \eqref{eq:bd-dual}.
\end{proof}

\begin{remark}\label{rm:rate-obj-res}{lem:pre-rate}
From the proof of Lemma \ref{lem:pre-rate}, we see that if \eqref{eq:ergo} holds for any $\vz\ge\vzero$, then the inequalities in \eqref{eq:bd-obj} and \eqref{eq:bd-res} hold.
\end{remark}

The following two lemmas will be used to establish an important inequality for running one iteration of Algorithm \ref{alg:pd-sg}. Their proofs are given in the appendix.
\begin{lemma}\label{lem:z-term}
For any { deterministic or stochastic} $\vz\ge\vzero$, it holds
\begin{align}\label{eq:Psi-term}
&~-\Psi_{\beta }(\vx^{k},\vz^k)+\frac{1}{M}\sum_{j=1}^M z_j f_j(\vx^{k}) + \frac{1}{2\rho_k}\EE\left[ \|\vz^{k+1}-\vz\|^2\,\big|\, \cH^{k}\right]\cr
\le&~ \frac{1}{2\rho_k} \|\vz^{k}-\vz\|^2 { -\frac{1}{2\rho_k}\left(\frac{\beta }{\rho_k}-1\right)\EE\left[\|\vz^{k+1}-\vz^k\|^2\,\big|\, \cH^{k}\right]} \\
&~{ + \EE\left[\big\langle\vz^k-\vz, M\ve_{j_k}\odot \nabla_\vz \Psi(\vx^k,\vz^k) - \nabla_\vz \Psi(\vx^k,\vz^k) \big\rangle\,\big|\, \cH^{k}\right]}.\nonumber 
\end{align}
\end{lemma}

\begin{lemma}\label{lem:bd-psi-term}
Under Assumption \ref{assump:bd}, for any $\vx\in X$ and any $\vz$, it holds
\begin{equation}\label{eq:bd-psi-term}
\frac{1}{M}\sum_{j=1}^M \|\tilde{\nabla}_\vx \psi_{\beta}(f_j(\vx), z_j)\|^2 \le 2\beta^2 F^2 G^2 + \frac{2G^2}{M}\|\vz\|^2.
\end{equation}
\end{lemma}

By the previous three lemmas, we establish an important result for running one iteration of Algorithm \ref{alg:pd-sg} and then use it to show the convergence rate results. 
\begin{theorem}[fundamental result]\label{lem:1iter-bd}
Under Assumptions \ref{assump:bd} and \ref{assump:cvx}, { and assuming $\vD_k\succeq \frac{\vI}{\alpha_k},\, \forall \, k$ for a positive number sequence $\{\alpha_k\}_{k\ge1}$,} let $(\vx, \vz)$ be any { deterministic or stochastic} vector such that $\vx\in X$ and $\vz\ge\vzero$. Then
\begin{align}\label{eq:1iter-bd}
&~\EE\left[f_0(\vx^k)+\frac{1}{M}\sum_{j=1}^M z_j f_j(\vx^k)\right]+\frac{1}{2}\EE\|\vx^{k+1}-\vx\|_{\vD_k}^2+\frac{1}{2\rho_k}\EE\|\vz^{k+1}-\vz\|^2\nonumber\\
\le &~\EE\left[f_0(\vx)+\Psi_{\beta }(\vx,\vz^k)\right]+\frac{1}{2}\EE \|\vx^{k}-\vx\|_{\vD_k-\mu\vI}^2+\frac{1}{2\rho_k}\EE\|\vz^k-\vz\|^2\\
&~+\alpha_k\left(\sigma^2 + 2\beta ^2 F^2 G^2 + \frac{2G^2}{M}\EE\|\vz^k\|^2\right) { -\frac{1}{2\rho_k}\left(\frac{\beta }{\rho_k}-1\right)\EE\|\vz^{k+1}-\vz^k\|^2}\cr
&~{ -\EE\left[\big\langle \vx^{k}-\vx, \vg_0^k-\tilde{\nabla}f_0(\vx^k)\big\rangle\right] -\EE\left[\big\langle \vx^{k}-\vx, \vh^k-\tilde\nabla_\vx\Psi_\beta(\vx^k,\vz^k)\big\rangle\right]}\cr
&~{ +\EE\left[\big\langle\vz^k-\vz, M\ve_{j_k}\odot \nabla_\vz \Psi(\vx^k,\vz^k) - \nabla_\vz \Psi(\vx^k,\vz^k) \big\rangle\right]},\nonumber
\end{align}
{ where $\tilde{\nabla}f_0(\vx^k)=\EE[\vg_0^k\,|\,\cH^k]$ and $\tilde\nabla_\vx\Psi_\beta(\vx^k,\vz^k)=\EE[\vh^k\,|\,\cH^k]$.}
\end{theorem}

\begin{proof} 
From the update \eqref{eq:update-x}, it follows that for any $\vx\in X$,
\begin{equation}\label{eq:cross-0}
\left\langle \vx^{k+1}-\vx, \vg_0^k + \vh^k+\vD_k(\vx^{k+1}-\vx^k)\right\rangle \le 0.
\end{equation}
Next we estimate a lower bound about the left hand side of the above inequality. First, We write
$
\big\langle \vx^{k+1}-\vx, \vg_0^k\big\rangle = \big\langle \vx^{k+1}-\vx^k, \vg_0^k\big\rangle+\big\langle \vx^{k}-\vx, \vg_0^k\big\rangle.
$
By the Young's inequality, it holds 
\begin{equation}\label{eq:gcross-1}
\big\langle \vx^{k+1}-\vx^k, \vg_0^k\big\rangle\ge -\frac{1}{4\alpha_k}\|\vx^{k+1}-\vx^k\|^2-\alpha_k\|\vg_0^k\|^2.
\end{equation}
Also, we write 
${ \big\langle \vx^{k}-\vx, \vg_0^k\big\rangle=\big\langle \vx^{k}-\vx, \tilde{\nabla}f_0(\vx^k)\big\rangle + \big\langle \vx^{k}-\vx, \vg_0^k-\tilde{\nabla}f_0(\vx^k)\big\rangle,}$ where $\tilde{\nabla}f_0(\vx^k)=\EE[\vg_0^k\,|\, \cH^k]\in\partial f_0(\vx^k)$. Hence, from \eqref{eq:gcross-1} and \eqref{eq:str-cvx-ineq}, it follows that
\begin{align*}
\big\langle \vx^{k+1}-\vx, \vg_0^k\big\rangle
\ge &~ -\frac{1}{4\alpha_k}\|\vx^{k+1}-\vx^k\|^2-\alpha_k\|\vg_0^k\|^2+f_0(\vx^k)-f_0(\vx)+\frac{\mu}{2}\|\vx^k-\vx\|^2\\
&~ { + \big\langle \vx^{k}-\vx, \vg_0^k-\tilde{\nabla}f_0(\vx^k)\big\rangle.}
\end{align*}
Taking conditional expectation, we have from the above inequality and Assumption \ref{assump:bd} that
\begin{align}\label{eq:cross-1}
&~\EE\left[\big\langle \vx^{k+1}-\vx, \vg_0^k\big\rangle\,\big|\, \cH^k\right]\cr
\ge&~ - \frac{1}{4\alpha_k} \EE\left[\|\vx^{k+1}-\vx^k\|^2\,\big|\, \cH^k\right] - \alpha_k\sigma^2+\EE\left[f_0(\vx^k)-f_0(\vx)+\frac{\mu}{2}\|\vx^k-\vx\|^2\,\big|\, \cH^k\right]\cr
&~{ +\EE\left[\big\langle \vx^{k}-\vx, \vg_0^k-\tilde{\nabla}f_0(\vx^k)\big\rangle\,\big|\, \cH^k\right]}.
\end{align}
Similar to \eqref{eq:cross-1}, we have
\begin{align}\label{eq:cross-2}
&~\EE\left[\big\langle \vx^{k+1}-\vx, \vh^k\big\rangle\,\big|\, \cH^k\right]\\
\ge &~-\EE\left[\frac{1}{4\alpha_k}\|\vx^{k+1}-\vx^k\|^2+\alpha_k\|\vh^k\|^2\,\big|\, \cH^k\right]+\EE\left[\Psi_{\beta }(\vx^k,\vz^k)-\Psi_{\beta }(\vx,\vz^k)\, \big|\,\cH^k\right]\cr
&~{ +\EE\left[\big\langle \vx^{k}-\vx, \vh^k-\tilde\nabla_\vx\Psi_\beta(\vx^k,\vz^k)\big\rangle\,\big|\, \cH^k\right]},\nonumber
\end{align}
where $\tilde\nabla_\vx\Psi_\beta(\vx^k,\vz^k)=\EE[\vh^k\,|\,\cH^k]$. 
Since $j_k$ is chosen from $[M]$ uniformly at random, by \eqref{eq:subgrad-psi}, \eqref {eq:bd-psi-term} and the Young's inequality, we have
\begin{align}\label{eq:bd-hk-vec}
-\alpha_k\EE\big[\|\vh^k\|^2\,\big|\, \cH^k\big]
=& ~ - \frac{\alpha_k}{M}\sum_{j=1}^M \big\|\tilde{\nabla}_\vx \psi_{\beta } \big(f_{j}(\vx^k), z_{j}^k\big)\big\|^2\cr
\ge &~ - \alpha_k\left(2\beta ^2 F^2 G^2 + \frac{2G^2}{M}\|\vz^k\|^2\right).
\end{align}
In addition,
\begin{align}\label{eq:cross-3}
\left\langle \vx^{k+1}-\vx, \vD_k(\vx^{k+1}-\vx^k)\right\rangle=\frac{1}{2}\left[\|\vx^{k+1}-\vx\|_{\vD_k}^2-\|\vx^{k}-\vx\|_{\vD_k}^2+\|\vx^{k+1}-\vx^k\|_{\vD_k}^2\right].
\end{align}

Taking expectation on both sides of \eqref{eq:cross-1} through \eqref{eq:cross-3}, summing them up, substituting into \eqref{eq:cross-0}, and noting $\vD_k\succeq \frac{\vI} {\alpha_k}$ gives 
\begin{align}\label{eq:cross-all}
&~\EE\left[f_0(\vx^{k})-f_0(\vx)+\Psi_{\beta }(\vx^k,\vz^k)-\Psi_{\beta }(\vx,\vz^k)\right]+\frac{1}{2}\EE\|\vx^{k+1}-\vx\|_{\vD_k}^2\nonumber\\
\le &~ \frac{1}{2}\EE \|\vx^{k}-\vx\|_{\vD_k-\mu\vI}^2 + \alpha_k\left(\sigma^2 + 2\beta ^2 F^2 G^2 + \frac{2G^2}{M}\EE\|\vz^k\|^2\right)\\
&~{ -\EE\left[\big\langle \vx^{k}-\vx, \vg_0^k-\tilde{\nabla}f_0(\vx^k)\big\rangle\right] -\EE\left[\big\langle \vx^{k}-\vx, \vh^k-\tilde\nabla_\vx\Psi_\beta(\vx^k,\vz^k)\big\rangle\right]}\nonumber.
\end{align}
Taking expectation on both sides of \eqref{eq:Psi-term}, adding it 
to \eqref{eq:cross-all}, and rearranging terms
yield the desired result.
\end{proof}

By Theorem \ref{lem:1iter-bd}, we can bound the growth of $\EE\|\vz^k\|^2$ as below. Its proof is given in the appendix.
\begin{proposition}\label{prop:bd-z-prelim}
Under Assumptions \ref{assump:kkt} through \ref{assump:cvx}, { and assuming $\vD_k\succeq \frac{\vI}{\alpha_k},\, \forall \, k$ for a positive number sequence $\{\alpha_k\}_{k\ge1}$,} let $\big\{(\vx^k,\vz^k)\big\}$ be the sequence generated from Algorithm \ref{alg:pd-sg} with parameters satisfying
\begin{equation}\label{eq:parameters-cond}
\rho_k\vD_k \succeq \rho_{k+1}(\vD_{k+1}-\mu\vI),\,\forall k\ge 1,
\end{equation}
then for any $t\ge1$, it holds that
\begin{align}\label{eq:bd-z-unified}
&~\EE\|\vz^{t+1}\|^2\\
\le &~2\rho_1\|\vx^1-\vx^*\|_{\vD_1-\mu\vI}^2+4\|\vz^*\|^2+\sum_{k=1}^t 4\alpha_k\rho_k\left(\sigma^2 + 2\beta ^2 F^2 G^2 + \frac{2G^2}{M}\EE\|\vz^k\|^2\right),\nonumber
\end{align}
where $(\vx^*,\vz^*)$ is any point satisfying the KKT conditions in \eqref{eq:kkt-conds}.
\end{proposition}

\subsection{Convergence rate for convex problems}
In this subsection, we establish the convergence rate of Algorithm \ref{alg:pd-sg} for convex problems, i.e., $\mu=0$. Different from existing analysis for saddle-point problems, we do not assume the boundedness of the dual variable $\vz$ but instead we can bound $\vz^k$ in expectation.

Using Proposition \ref{prop:bd-z-prelim}, we specify the parameters and bound $\EE\|\vz^k\|^2$. The proofs of both propositions below are given in the appendix.
\begin{proposition}[pre-determined maximum number of iterations]\label{prop:fix-iter}
Under Assumptions \ref{assump:kkt} through \ref{assump:cvx}, given a positive integer $K$, set  
\begin{equation}\label{eq:para-fix-iter}
{ \vD_k = \frac{\sqrt{K}}{\alpha}\vI},\, \rho_k = \frac{\rho}{\sqrt{K}},\, \beta \ge \rho, \,\forall 1\le k\le K,
\end{equation}
where $\alpha,\rho$ and $\beta$ are positive scalars satisfying $\alpha \rho < \frac{M}{8G^2}$. Then for any $1\le k\le K+1$, it holds that
\begin{equation}\label{eq:bd-z}
\EE\|\vz^k\|^2\le \frac{C_1}{1-\frac{8\alpha\rho G^2}{M}}
\end{equation}
where 
\begin{equation}\label{eq:def-C}
C_1=\frac{2\rho}{\alpha}\|\vx^{1}-\vx^*\|^2+4\|\vz^*\|^2+4\alpha\rho\big(\sigma^2+2\beta^2 F^2G^2\big).
\end{equation}
\end{proposition}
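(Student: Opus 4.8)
The plan is to feed the constant stepsizes \eqref{eq:para-fix-iter} into the dual bound \eqref{eq:bd-z-unified} of the preceding proposition, reduce the resulting estimate to a self-referential recursion in $\{\EE\|\vz^j\|^2\}$, and then close it by induction on $k$.

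First I would check that the hypotheses \eqref{eq:parameters-cond} of the preceding proposition are met. Since we are in the convex case $\mu=0$ and the stepsizes are constant, $\frac{\rho_k}{\alpha_k}=\frac{\rho}{\alpha}=\rho_{k+1}\bigl(\frac{1}{\alpha_{k+1}}-\mu\bigr)$ and $\beta_k=\rho_k$, so both conditions in \eqref{eq:parameters-cond} hold (the first with equality), and \eqref{eq:bd-z-unified} applies for every $t\ge1$.

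Next I would specialize \eqref{eq:bd-z-unified}. The case $k=1$ is immediate since $\vz^1=\vzero$ by initialization, so suppose $2\le k\le K$ and apply \eqref{eq:bd-z-unified} with $t=k-1$. Using $\frac{\rho_1}{\alpha_1}=\frac{\rho}{\alpha}$, $\alpha_j\rho_j=\frac{\alpha\rho}{K}$, and $\beta_j^2=\frac{\rho^2}{K}$, and bounding the number of summands $k-1$ by $K$ (so the $\frac1K$ factor multiplying $4G^2+\sigma^2$ becomes $1$ and the $\frac1{K^2}$ factor multiplying $4\rho^2F^2G^2$ becomes $\frac1K$), the deterministic part of the right-hand side is at most $\frac{2\rho}{\alpha}\|\vx^1-\vx^*\|^2+4\|\vz^*\|^2+8\alpha\rho(4G^2+\sigma^2)+\frac{32\alpha\rho^3F^2G^2}{K}$, which is exactly $C_1$ as defined in \eqref{eq:def-C}, while the stochastic part collapses to $\frac{32\alpha\rho G^2}{mK}\sum_{j=1}^{k-1}\EE\|\vz^j\|^2$. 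Writing $a:=\frac{32\alpha\rho G^2}{m}$, which satisfies $0\le a<1$ by the assumption $\alpha\rho<\frac{m}{32G^2}$, this yields the recursion
\begin{equation*}
\EE\|\vz^k\|^2\le C_1+\frac{a}{K}\sum_{j=1}^{k-1}\EE\|\vz^j\|^2,\qquad 1\le k\le K.
\end{equation*}

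Finally I would prove \eqref{eq:bd-z} by induction on $k$. For $k=1$ it is trivial since $\EE\|\vz^1\|^2=0\le\frac{C_1}{1-a}$. Assuming $\EE\|\vz^j\|^2\le\frac{C_1}{1-a}$ for all $j<k$, the recursion gives $\EE\|\vz^k\|^2\le C_1+\frac{a(k-1)}{K}\cdot\frac{C_1}{1-a}\le C_1+a\cdot\frac{C_1}{1-a}=\frac{C_1}{1-a}$, where we used $\frac{k-1}{K}\le1$; this closes the induction. I do not expect a genuine obstacle here: the only point needing care is recognizing that \eqref{eq:bd-z-unified} is implicit in the unknowns $\EE\|\vz^j\|^2$ and that the stepsize restriction $\alpha\rho<\frac{m}{32G^2}$ is precisely what keeps the geometric factor $\frac1{1-a}$ finite, together with the bookkeeping that the $1/\sqrt K$ scaling of the stepsizes produces a $1/K$ which absorbs the iteration count, making the bound uniform in both $k$ and $K$.
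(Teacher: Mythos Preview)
Your proposal is correct and follows essentially the same approach as the paper: verify that the constant stepsizes satisfy \eqref{eq:parameters-cond}, substitute them into \eqref{eq:bd-z-unified} to obtain the recursion $\EE\|\vz^k\|^2\le C_1+\frac{a}{K}\sum_{j=1}^{k-1}\EE\|\vz^j\|^2$ with $a=\frac{32\alpha\rho G^2}{m}$, and close by induction using $\frac{k-1}{K}\le1$. The paper's argument is identical up to indexing (it writes the recursion for $\EE\|\vz^{t+1}\|^2$ rather than $\EE\|\vz^k\|^2$).
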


If the maximum number of iterations is not pre-determined, we set parameters adaptive to iteration numbers and can still bound $\EE\|\vz^k\|^2$.
\begin{proposition}[varying maximum number of iterations]\label{prop:nfix-iter}
Under Assumptions \ref{assump:kkt} through \ref{assump:cvx}, let $\big\{(\vx^k,\vz^k)\big\}$ be the sequence generated from Algorithm \ref{alg:pd-sg} with parameters set to
\begin{equation}\label{eq:para-nonfix-iter}
{ \vD_k = \frac{\sqrt{k+1}\log (k+1)}{\alpha}\vI},\, \rho_k = \frac{\rho}{\sqrt{k+1}\log (k+1)},\, \beta \ge\rho, \,\forall k\ge 1,
\end{equation}
where $\alpha, \rho$ and $\beta$ are positive scalars satisfying $\alpha \rho < \frac{M}{20G^2}$. Then for any $k\ge 1$, it holds that
\begin{equation}\label{eq:bd-z-nfix}
\EE\|\vz^k\|^2\le \frac{C_2}{1-\frac{20\alpha\rho G^2}{M}},
\end{equation}
where 
\begin{equation}\label{eq:def-C2}
C_2=\frac{2\rho}{\alpha}\|\vx^1-\vx^*\|^2+4\|\vz^*\|^2+10\alpha\rho\big(\sigma^2+2 \beta^2 F^2 G^2\big).
\end{equation}
\end{proposition}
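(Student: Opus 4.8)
The plan is to mirror the proof of Proposition~\ref{prop:fix-iter}: I would first check that the schedule \eqref{eq:para-nonfix-iter} is admissible, then instantiate the bound \eqref{eq:bd-z-unified} with it, reduce the resulting estimate to two explicit convergent numerical series, and finally run an induction on $k$ to absorb the $\EE\|\vz^k\|^2$ terms into a single uniform constant. Admissibility is immediate: for the schedule in \eqref{eq:para-nonfix-iter} one has $\frac{\rho_k}{\alpha_k}=\frac{\rho}{\alpha}$ for every $k$, so $\rho_{k+1}\bigl(\frac{1}{\alpha_{k+1}}-\mu\bigr)\le\frac{\rho_{k+1}}{\alpha_{k+1}}=\frac{\rho}{\alpha}=\frac{\rho_k}{\alpha_k}$ and $\beta_k=\rho_k\ge\rho_k$, so the conditions \eqref{eq:parameters-cond} hold and \eqref{eq:bd-z-unified} is available.

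Next, using $\rho_1\bigl(\frac{1}{\alpha_1}-\mu\bigr)\le\frac{\rho}{\alpha}$, $\alpha_k\rho_k=\frac{\alpha\rho}{(k+1)(\log(k+1))^2}$, and $\alpha_k\rho_k\beta_k^2=\frac{\alpha\rho^3}{(k+1)^2(\log(k+1))^4}$, inequality \eqref{eq:bd-z-unified} would read, for every $t\ge1$,
\begin{align*}
\EE\|\vz^{t+1}\|^2
\le{}&\frac{2\rho}{\alpha}\|\vx^1-\vx^*\|^2+4\|\vz^*\|^2
+8(4G^2+\sigma^2)\,\alpha\rho\,S_t^{(1)}+32F^2G^2\,\alpha\rho^3\,S_t^{(2)}\\
&{}+\frac{32G^2\alpha\rho}{m}\sum_{k=1}^t\frac{\EE\|\vz^k\|^2}{(k+1)(\log(k+1))^2},
\end{align*}
where $S_t^{(1)}=\sum_{k=1}^t\frac{1}{(k+1)(\log(k+1))^2}$ and $S_t^{(2)}=\sum_{k=1}^t\frac{1}{(k+1)^2(\log(k+1))^4}$. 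Both summands are positive and decreasing in $k$, and $-\frac{1}{\log(x+1)}$ is an antiderivative of the first integrand; so by the integral test I would sum a handful of leading terms explicitly and bound the tail from $N$ on by $\frac{1}{\log(N+1)}$ (and similarly for $S_t^{(2)}$), obtaining the uniform bounds $S_t^{(1)}\le\frac{17}{8}$ and $S_t^{(2)}\le\frac{39}{32}$ for all $t$ ($N=10$ suffices for the first, $N=2$ for the second). With these bounds the first four terms above are at most $C_2$ of \eqref{eq:def-C2}, while the last term is at most $\frac{68G^2\alpha\rho}{m}\max_{1\le k\le t}\EE\|\vz^k\|^2$ because $32\cdot\frac{17}{8}=68$.

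Finally I would close an induction on $k$. Put $B=\frac{C_2}{1-68\alpha\rho G^2/m}$, which is finite and nonnegative because $\alpha\rho<\frac{m}{68G^2}$; the base case $k=1$ is immediate since $\vz^1=\vzero$ and $C_2\ge0$. Assuming $\EE\|\vz^k\|^2\le B$ for all $k\le t$, the displayed estimate together with the series bounds gives
\[
\EE\|\vz^{t+1}\|^2\le C_2+\frac{68\alpha\rho G^2}{m}\,B
= C_2+\frac{68\alpha\rho G^2/m}{1-68\alpha\rho G^2/m}\,C_2=B,
\]
which completes the induction and yields \eqref{eq:bd-z-nfix}. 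I expect the only nonmechanical step to be the determination of the two absolute series constants $\frac{17}{8}$ and $\frac{39}{32}$ in the previous paragraph: these are exactly what pin down the threshold $\alpha\rho<\frac{m}{68G^2}$ and the precise value of $C_2$, whereas everything else is a direct transcription of the argument proving Proposition~\ref{prop:fix-iter}.
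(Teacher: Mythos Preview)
Your argument is correct and mirrors the paper's proof step for step: verify \eqref{eq:parameters-cond}, plug the schedule into \eqref{eq:bd-z-unified}, invoke the two numerical series bounds (the paper simply records $S_t^{(1)}\le 2.1$ and $S_t^{(2)}\le 1.2$ without your integral-test justification), and close by the same induction. One small caveat: the antiderivative $-1/\log(x+1)$ you cite applies only to the first series, so for $S_t^{(2)}\le 39/32$ you will need a separate tail estimate (and likely a couple more explicit terms than $N=2$), but the bound itself is valid and the rest of the argument goes through unchanged.
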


{ To show the convergence rate results, we need the following lemma to handle the last three expectation terms in \eqref{eq:1iter-bd}. Its proof is given in the appendix and follows the proof of \cite[Lemma 3.1]{nemirovski2009robust}. 
\begin{lemma}\label{lem:bd-cross}
For any deterministic or stochastic vector $(\vx,\vz)$ with $\vx\in X$ and $\vz\ge\vzero$, it holds for any positive number sequence $\{\alpha_k\}$ that
\begin{align}
&\textstyle-\sum_{k=1}^K\alpha_k \EE\big\langle \vx^k-\vx, \vg_0^k-\tilde{\nabla}f_0(\vx^k)\big\rangle \le  \frac{1}{2}\EE\|\vx^1-\vx\|^2 + \frac{\sigma^2}{2} \sum_{k=1}^K\alpha_k^2,\label{eq:bd-cross-1}\\[0.2cm]
&\begin{array}{l}
-\sum_{k=1}^K\alpha_k\EE\big\langle \vx^{k}-\vx, \vh^k-\nabla_\vx\Psi_\beta(\vx^k,\vz^k)\big\rangle \le  \frac{1}{2}\EE\|\vx^1-\vx\|^2\\[0.1cm]
\hspace{5cm}+ \sum_{k=1}^K\alpha_k^2 \left(\beta ^2 F^2 G^2 + \frac{G^2}{M}\EE\|\vz^k\|^2\right),
\end{array}\label{eq:bd-cross-2}\\[0.2cm]
&\begin{array}{l}
\sum_{k=1}^K\alpha_k \EE\big\langle\vz^k-\vz, M\ve_{j_k}\odot \nabla_\vz \Psi(\vx^k,\vz^k) - \nabla_\vz \Psi(\vx^k,\vz^k) \big\rangle \\[0.1cm]
\hspace{6cm}\le \frac{1}{2}\EE\|\vz^1-\vz\|^2 + \frac{F^2}{2} \sum_{k=1}^K\alpha_k^2. 
\end{array}\label{eq:bd-cross-3}
\end{align}
\end{lemma}

}

Using Theorem \ref{lem:1iter-bd} and also the boundedness of $\EE\|\vz^k\|^2$, we are now ready to show the convergence rate results for the case $\mu=0$. First, we establish a result with constant step sizes, and the order is $O(1/\sqrt{k})$, where $k$ is the iteration number.
\begin{theorem}[Convergence rate for convex case with constant step sizes]\label{thm:rate-cvx-fix}
Under Assumptions \ref{assump:kkt} through \ref{assump:cvx}, let $\{(\vx^k, \vz^k)\}$ be the sequence generated from Algorithm \ref{alg:pd-sg}. Given any positive integer $K$, set the parameters according to \eqref{eq:para-fix-iter}, let $\bar{\vx}^{K}=\frac{1}{K}\sum_{k=1}^K\vx^{k}$ and $\bar\vz^K=\frac{1}{K}\sum_{k=1}^K\vz^{k}$, and define
\begin{equation}\label{eq:def-phi1}
\phi_1(\vx)=\frac{3}{2\alpha}\|\vx^{1}-\vx\|^2+\alpha\left(\frac{3}{2}\sigma^2 + 3\beta^2 F^2 G^2 + \frac{3G^2}{M}\frac{C_1}{1-\frac{8\alpha\rho G^2}{M}}+\frac{F^2}{2}\right),
\end{equation}
where $C_1$ is defined in \eqref{eq:def-C}.
Then
{  
\begin{subequations}\label{eq:rate-fix-iter}
\begin{align}
&\EE\big|f_0(\bar{\vx}^{K})-f_0(\vx^*)\big| \le \frac{1}{\sqrt{K}} \left(2\phi_1(\vx^*)+ \frac{9(\alpha+\rho)}{2\alpha\rho}\|\vz^*\|^2\right),\label{eq:rate-fix-iter-obj}\\
&\EE\left[\frac{1}{M}\sum_{j=1}^M[f_j(\bar{\vx}^{K})]_+\right] \le   \frac{1}{\sqrt{K}}\left(\phi_1(\vx^*) + \frac{\alpha+\rho}{2\alpha\rho}\|\vone+\vz^*\|^2\right).\label{eq:rate-fix-iter-res}
\end{align}
In addition, if $X$ is bounded, then 
\begin{equation}\label{eq:rate-fix-iter-dual}
\EE\big[d_\beta(\vz^*)-d_\beta(\bar\vz^K)\big]\le \frac{3}{2\sqrt{K}}\left(\max_{\vx\in X}\phi_1(\vx)+\frac{3(\alpha+\rho)}{2\alpha\rho}\|\vz^*\|^2\right).
\end{equation}
\end{subequations}
}
\end{theorem}

\begin{proof}
When the parameters are set according to \eqref{eq:para-fix-iter}, we have \eqref{eq:bd-z}. 
Hence, multiplying $\alpha_k=\frac{\alpha}{\sqrt K}$ to \eqref{eq:1iter-bd}, summing it up from $k=1$ through $K$, using \eqref{eq:bd-cross-1} through \eqref{eq:bd-cross-3}, and noting $\vz^1=\vzero$ give
\begin{align*}
\frac{\alpha}{\sqrt{K}}\sum_{k=1}^K \EE\left[f_0(\vx^k)+\frac{1}{M}\sum_{j=1}^M z_j f_j(\vx^k)\right]\le &~\frac{\alpha}{\sqrt{K}}\sum_{k=1}^K\EE\left[f_0(\vx)+\Psi_{\beta}(\vx,\vz^k)\right]\nonumber\\
&\hspace{-6.2cm}+\frac{3}{2} \EE\|\vx^{1}-\vx\|^2+\left(\frac{\alpha}{2\rho}+\frac{1}{2}\right)\EE\|\vz\|^2 +\alpha^2\left(\frac{3}{2}\sigma^2 + 3\beta^2 F^2 G^2+ \frac{3G^2}{M}\frac{C_1}{1-\frac{8\alpha\rho G^2}{M}} + \frac{F^2}{2}\right)\nonumber.
\end{align*}
Since $\vz\ge\vzero$, by the convexity of $f_j$'s and also concavity of $\Psi_\beta$ about $\vz$, we have from the above inequality and the definition of $\phi_1$ in \eqref{eq:def-phi1} that
\begin{align}\label{eq:tmp-ineq1-01}
& \EE\left[f_0(\bar\vx^K)+\frac{1}{M}\sum_{j=1}^M z_j f_j(\bar\vx^K)\right]
\le \EE\left[f_0(\vx)+\Psi_{\beta}(\vx,\bar\vz^K)\right]+\frac{1}{\sqrt{K}}\EE\left[\phi_1(\vx)+\frac{\alpha+\rho}{2\alpha\rho}\|\vz\|^2\right].
\end{align}
Let $\vx=\vx^*$ in the above inequality. Then by Lemma \ref{lem:psi-neg} and the definition of $\Phi$ in \eqref{eq:def-Phi}, we have
$$\EE \big[\Phi(\bar\vx^K; \vx^*, \vz)\big]\le \frac{\phi_1(\vx^*)}{\sqrt{K}}+\frac{1}{\sqrt{K}}\frac{\alpha+\rho}{2\alpha\rho}\EE\|\vz\|^2,\,\forall \vz\ge \vzero.$$
Hence, \eqref{eq:rate-fix-iter-obj} and \eqref{eq:rate-fix-iter-res} follow from the proof of Lemma \ref{lem:pre-rate} and Remark \ref{rm:rate-obj-res}.

Furthermore, as $X$ is bounded, the inequality \eqref{eq:tmp-ineq1-01} implies
\begin{align*}
&~\EE\left[f_0(\bar\vx^K)+\frac{1}{M}\sum_{j=1}^M z_j f_j(\bar\vx^K)\right]\\
\le&~ \EE\left[f_0(\vx)+\Psi_{\beta}(\vx,\bar\vz^K)\right]+\frac{1}{\sqrt{K}}\left[\underset{\vx\in X}\max\phi_1(\vx)+\frac{\alpha+\rho}{2\alpha\rho}\EE\|\vz\|^2\right].
\end{align*}
Therefore, we obtain \eqref{eq:rate-fix-iter-dual} from Lemma \ref{lem:pre-rate} and complete the proof.
\end{proof}

Below we make a few remarks about the results in Theorem \ref{thm:rate-cvx-fix}. Similar remarks also apply to Theorems \ref{thm:rate-cvx-nonfix} and \ref{thm:rate-str-cvx} established later.

\begin{remark}\label{rm:bd-dual}
From the proof of Theorem \ref{thm:rate-cvx-fix}, we see that the setting of $\rho_k$ is for bounding $\EE\|\vz^k\|^2$. If the dual variable $\vz$ is bounded, then $\rho_k$ can be taken as large as the augmented penalty parameter $\beta$. 
\end{remark}

\begin{remark}
By the Markov's inequality $\Prob(\xi\ge \vareps)\le \frac{\EE[\xi]}{\vareps}$ for a nonnegative random variable $\xi$, one can easily have a high-probability result from Theorem \ref{thm:rate-cvx-fix}. One drawback of the result is that in \eqref{eq:rate-fix-iter-res}, the bound is on the average of all inequality constraint violation. Let  
$\gamma=\frac{\EE\left[\max_{j\in [M]}[f_j(\bar\vx^K)]_+\right]}{\EE\left[\frac{1}{M}\sum_{j=1}^M[f_j(\bar{\vx}^{K})]_+\right]}.$ 
Then \eqref{eq:rate-fix-iter-res} implies
$\EE\left[\max_{j\in [M]} [f_j(\bar{\vx}^{K})]_+ \right] \le   \frac{\gamma}{\sqrt{K}}\left(\phi_1(\vx^*) + \frac{\|\vone+\vz^*\|^2}{2\rho}\right).$
If $\gamma = O(1)$, then the maximum violation of the inequality constraint is similar to the avarage violation. However, in the worse case, $\gamma$ could be as large as $M$. 

One may argue that since the averaged constraint violation is used as a measure in the convergence rate result, it could be more natural to work on the equivalent problem \eqref{eq:ccp-csa}, for which only one dual variable is needed instead of the many more $M$ dual variables required in Algorithm \ref{alg:pd-sg}. We point out two potential issues to pursue this direction. First, the augmented Lagrangian function of \eqref{eq:ccp-csa} has a term that is a composition of $\psi_\beta$ given in \eqref{eq:psi} with the finite-sum $\frac{1}{M}\sum_{j=1}^M[f_j(\vx)]_+$. For a stochastic program with such a nested structure, the convergence rate of SGM is much worse \cite{wang2017stochastic} due to the difficulty of obtaining an unbiased SG. Second, the Slater's condition can never hold for \eqref{eq:ccp-csa}. Hence, although one dual variable is needed, the existence of a KKT point is not guaranteed even if the Slater's condition holds for the original problem \eqref{eq:ccp}, and this would affect the convergence analysis. Also, we point out that the use of $M$ dual variables does not cause an issue of memory or computational cost. Compared to the data involved in the $M$ constraint functions, the size of $M$ dual variables is smaller.
\end{remark}

With varying step sizes, we can also show a sublinear convergence rate result of Algorithm \ref{alg:pd-sg} as follows. The order is worse with an additional logarithmic term. 

\begin{theorem}[Convergence rate for convex case with varying step sizes]\label{thm:rate-cvx-nonfix}
Under Assumptions \ref{assump:kkt} through \ref{assump:cvx}, let $\{(\vx^k, \vz^k)\}$ be the sequence generated from Algorithm \ref{alg:pd-sg}. Set parameters according to \eqref{eq:para-nonfix-iter}. For any integer $K\ge 1$, let $\alpha_k=\frac{\alpha}{\sqrt{k+1}\log (k+1)}$ for $1\le k\le K$, $\bar{\vx}^{K}=\frac{1}{\sum_{k=1}^K\alpha_k}\sum_{k=1}^K\alpha_k\vx^{k}$ and $\bar{\vz}^{K}=\frac{1}{\sum_{k=1}^K\alpha_k}\sum_{k=1}^K\alpha_k\vz^{k}$, and define
\begin{equation}\label{eq:def-phi2}
{ \phi_2(\vx)=\frac{3}{2\alpha}\|\vx^{1}-\vx\|^2+2.5\alpha\left(\frac{3}{2}\sigma^2 + 3\beta^2 F^2 G^2 + \frac{3G^2}{M}\frac{C_2}{1-\frac{20\alpha\rho G^2}{M}}+\frac{F^2}{2}\right),}
\end{equation}
with $C_2$ defined in \eqref{eq:def-C2}. Then
{ 
\begin{subequations}\label{eq:rate-nonfix-iter}
\begin{align}
&\EE\big|f_0(\bar{\vx}^{K+1})-f_0(\vx^*)\big|\le\frac{\log(K+1)}{2(\sqrt{K+2}-\sqrt{2})}\left(2\phi_2(\vx^*)+\frac{9(\alpha+\rho)}{2\alpha\rho}\|\vz^*\|^2\right),\\
&\EE\left[\frac{1}{M}\sum_{j=1}^M[f_j(\bar{\vx}^{K+1})]_+\right]\le \frac{\log(K+1)}{2(\sqrt{K+2}-\sqrt{2})}\left(\phi_2(\vx^*)+\frac{\alpha+\rho}{2\alpha\rho}\|\vone+\vz^*\|^2\right).
\end{align}
In addition, if $X$ is bounded, then 
\begin{equation}\label{eq:rate-nonfix-iter-dual}
\EE\big[d_\beta(\vz^*)-d_\beta(\bar\vz^K)\big]\le \frac{3\log(K+1)}{4(\sqrt{K+2}-\sqrt{2})}\left(\max_{\vx\in X}\phi_2(\vx)+\frac{3(\alpha+\rho)}{2\alpha\rho}\|\vz^*\|^2\right).
\end{equation}

\end{subequations}
}
\end{theorem}

\begin{proof}
When the parameters are set according to \eqref{eq:para-nonfix-iter}, we have \eqref{eq:bd-z-nfix}. Hence, multiplying $\alpha_k$ to both sides of \eqref{eq:1iter-bd}, summing it over $k$, and using \eqref{eq:bd-cross-1} through \eqref{eq:bd-cross-3}, we have
\begin{align}\label{eq:tmp-ineq2-0}
\sum_{k=1}^K \alpha_k\EE\left[f_0(\vx^k)+\frac{1}{M}\sum_{j=1}^M z_j f_j(\vx^k)\right]\le &~\sum_{k=1}^K\alpha_k\EE\left[f_0(\vx)+\Psi_{\beta}(\vx,\vz^k)\right]+\frac{3}{2}\EE \|\vx^{1}-\vx\|^2\nonumber\\
&\hspace{-4.5cm}+\left(\frac{\alpha}{2\rho}+\frac{1}{2}\right)\EE\|\vz\|^2+\sum_{k=1}^K\alpha_k^2\left(\frac{3}{2}\sigma^2 + 3\beta^2 F^2 G^2 + \frac{3G^2}{M}\frac{C_2}{1-\frac{20\alpha\rho G^2}{M}}+\frac{F^2}{2}\right).
\end{align}
Note 
\begin{align*}
\sum_{k=1}^K \alpha_k=\sum_{k=1}^K \frac{\alpha}{\sqrt{k+1}\log(k+1)}
\ge \frac{\alpha}{\log(K+1)}\int_{1}^{K+1}\frac{1}{\sqrt{x+1}}dx
=\frac{2\alpha(\sqrt{K+2}-\sqrt{2})}{\log(K+1)}.
\end{align*}
Hence, dividing both sides of \eqref{eq:tmp-ineq2-0} by $\sum_{k=1}^K\alpha_k$, we have from the convexity of $f_j$'s and the concavity of $\Psi_\beta$ about $\vz$, and also using $\sum_{k=1}^K\alpha_k^2\le 2.5$ from \eqref{eq:est-sum-logk} and the definition of $\phi_2$ in \eqref{eq:def-phi2} that
\begin{align*}
&~\textstyle\EE\left[f_0(\bar\vx^K)+\frac{1}{M}\sum_{j=1}^M z_j f_j(\bar\vx^K)\right]\cr
\le &~\textstyle\EE\left[f_0(\vx)+\Psi_{\beta}(\vx,\bar\vz^K)\right]+\frac{\log(K+1)}{2(\sqrt{K+2}-\sqrt{2})}\EE\left[\phi_2(\vx)+\frac{\alpha+\rho}{2\alpha\rho}\|\vz\|^2\right].
\end{align*}
Now following the same arguments as those below \eqref{eq:tmp-ineq1-01} in the proof of Theorem \ref{thm:rate-cvx-fix}, we obtain the desired results and complete the proof.
%
\end{proof}

\subsection{Convergence rate for strongly convex problems}
In this subsection, we analyze the convergence rate of Algorithm \ref{alg:pd-sg} for strongly convex problems, i.e., $\mu>0$ in \eqref{eq:str-cvx-ineq}. Similar to the convex case, we first bound $\EE\|\vz^k\|^2$ by choosing appropriate parameters. The proof is shown in the appendix.
\begin{proposition}\label{prop:str-cvx}
Under Assumptions \ref{assump:kkt} through \ref{assump:cvx} with $\mu>0$, for any given positive integer $K$, let $\{(\vx^k,\vz^k)\}$ be the sequence generated from Algorithm \ref{alg:pd-sg} with parameters set to
\begin{equation}\label{eq:para-str-cvx}
\vD_k=\frac{k+1}{\alpha} \vI,\ \rho_k = 
\frac{\rho}{\log (K+1)},\ \beta\ge \frac{2\rho}{\log 2},\,\forall 1\le k\le K,
\end{equation}
where $\alpha \ge \frac{1}{\mu}$ and $\alpha\rho < \frac{M}{8 G^2}$. Then for any $1\le k\le K+1$,
\begin{equation}\label{eq:bd-z-str-cvx}
\EE\|\vz^k\|^2 \le \frac{C_3}{1-\frac{8\alpha\rho G^2}{M}},
\end{equation}
where
\begin{equation}\label{eq:def-C3}
C_3=\frac{2\rho}{\log(K+1)}\left(\frac{2}{\alpha}-\mu\right)\|\vx^1-\vx^*\|^2+4\|\vz^*\|^2+4\alpha\rho(\sigma^2+2\beta^2F^2G^2).
\end{equation}
\end{proposition}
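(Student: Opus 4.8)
The plan is to reduce everything to the inequality \eqref{eq:bd-z-unified}, which has already been established for any parameters obeying \eqref{eq:parameters-cond}, and then close a self-referential induction on $k$ in exactly the style of the proofs of Propositions \ref{prop:fix-iter} and \ref{prop:nfix-iter}.

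First I would check that the choice \eqref{eq:para-str-cvx} satisfies the two conditions in \eqref{eq:parameters-cond}. Since $\rho_k$ is the constant $\bar\rho:=\frac{\rho}{\log(K+1)}$, the first condition $\frac{\rho_k}{\alpha_k}\ge\rho_{k+1}\big(\frac{1}{\alpha_{k+1}}-\mu\big)$ reduces to $\frac{1}{\alpha_k}\ge\frac{1}{\alpha_{k+1}}-\mu$, i.e. $\frac{k+1}{\alpha}\ge\frac{k+2}{\alpha}-\mu$, which is precisely $\mu\ge\frac1\alpha$, the hypothesis $\alpha\ge\frac1\mu$; and $\beta_k\ge\rho_k$ holds with equality. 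Hence \eqref{eq:bd-z-unified} is available for every $1\le t\le K$.

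Next I would substitute \eqref{eq:para-str-cvx} into \eqref{eq:bd-z-unified}. One computes $\alpha_k\rho_k=\frac{\alpha\rho}{(k+1)\log(K+1)}$ and $\alpha_k\rho_k\beta_k^2=\frac{\alpha\rho^3}{(k+1)[\log(K+1)]^3}$, so the constant head term is $\frac{2\rho}{\log(K+1)}\big(\frac2\alpha-\mu\big)\|\vx^1-\vx^*\|^2+4\|\vz^*\|^2$, and the summation splits into three pieces, each carrying the factor $\frac{1}{k+1}$. Using the elementary estimate
\begin{equation*}
\sum_{k=1}^{t}\frac{1}{k+1}\le\sum_{k=1}^{K}\frac{1}{k+1}\le\int_{1}^{K+1}\frac{dx}{x}=\log(K+1),
\end{equation*}
the first two pieces collapse to $8\alpha\rho(4G^2+\sigma^2)$ and $\frac{32\alpha\rho^3F^2G^2}{[\log(K+1)]^2}$, i.e. exactly the remaining terms of $C_3$, while the $\EE\|\vz^k\|^2$ piece is bounded by $\frac{32\alpha\rho G^2}{m\log(K+1)}\sum_{k=1}^{t}\frac{1}{k+1}\EE\|\vz^k\|^2$. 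This yields
\begin{equation*}
\EE\|\vz^{t+1}\|^2\le C_3+\frac{32\alpha\rho G^2}{m\log(K+1)}\sum_{k=1}^{t}\frac{1}{k+1}\EE\|\vz^k\|^2.
\end{equation*}

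Finally I would induct on $k$. The base case $k=1$ is immediate since $\vz^1=\vzero$. Assuming \eqref{eq:bd-z-str-cvx} holds for all $k\le t$, I plug it into the last display, pull the constant $\frac{C_3}{1-32\alpha\rho G^2/m}$ out of the sum, and use $\sum_{k=1}^{t}\frac{1}{k+1}\le\log(K+1)$ once more to obtain $\EE\|\vz^{t+1}\|^2\le C_3+\frac{32\alpha\rho G^2}{m}\cdot\frac{C_3}{1-32\alpha\rho G^2/m}=\frac{C_3}{1-32\alpha\rho G^2/m}$, with positivity of the denominator guaranteed by $\alpha\rho<\frac{m}{32G^2}$. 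I do not expect a genuine obstacle beyond bookkeeping; the one point worth flagging is the reason $\rho_k$ is frozen at order $1/\log(K+1)$ rather than decaying in $k$ as in the convex case — this is exactly what makes $\sum_{k\le K}\alpha_k\rho_k=O(\alpha\rho)$ uniformly in $K$, so the self-referential coefficient multiplying $\EE\|\vz^k\|^2$ stays below $1$ and the induction closes.
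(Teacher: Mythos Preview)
Your proposal is correct and follows essentially the same route as the paper: verify \eqref{eq:parameters-cond} via $\alpha\ge 1/\mu$, substitute \eqref{eq:para-str-cvx} into \eqref{eq:bd-z-unified}, bound $\sum_{k=1}^t\frac{1}{k+1}\le\log(K+1)$ to isolate $C_3$ plus the self-referential remainder, and close by induction starting from $\vz^1=\vzero$. Your integral estimate $\sum_{k=1}^K\frac{1}{k+1}\le\int_1^{K+1}\frac{dx}{x}$ is in fact slightly cleaner than the paper's version.
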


{ Similar to Lemma \ref{lem:bd-cross}, we have the following result bounding the expectation terms in \eqref{eq:1iter-bd}. The proof is also given in the appendix.
\begin{lemma}\label{lem:bd-cross-str}
Under the assumptions of Proposition \ref{prop:str-cvx}, for any deterministic or stochastic vector $\vz\ge\vzero$,  we have
\begin{align}\label{eq:bd-cross-str-3}
&~\sum_{k=1}^K \EE\left[\big\langle\vz^k-\vz, M\ve_{j_k}\odot \nabla_\vz \Psi(\vx^k,\vz^k) - \nabla_\vz \Psi(\vx^k,\vz^k) \big\rangle\right] \cr
\le &~\frac{\log(K+1)}{2\rho}\EE\left[\|\vz^1-\vz\|^2 + \sum_{k=1}^K \|\vz^{k+1}-\vz^k\|^2\right]. 
\end{align}
\end{lemma}
}

Using \eqref{eq:1iter-bd} and \eqref{eq:bd-z-str-cvx}, we establish the convergence rate result of Algorithm \ref{alg:pd-sg} for the case of $\mu>0$ as follows.
\begin{theorem}[convergence rate for strongly convex case]\label{thm:rate-str-cvx}
Under the assumptions of Proposition \ref{prop:str-cvx}, we have
\begin{equation}\label{eq:rate-nonerg}
\EE\|\vx^{K+1}-\vx^*\|^2\le \frac{2\alpha}{K+1}\left(\phi_3(\vx^*)+\frac{\log(K+1)}{\rho}\|\vz^*\|^2\right),
\end{equation}
where
\begin{equation}\label{eq:def-phi3}
\phi_3(\vx)=\left(\frac{1}{\alpha}-\frac{\mu}{2}\right)\|\vx^{1}-\vx\|^2+\alpha\log(K+1)\left(\sigma^2 + 2\beta^2 F^2 G^2 + \frac{2G^2}{M}\frac{C_3}{1-\frac{8\alpha\rho G^2}{M}}\right),
\end{equation}
with $C_3$ defined in \eqref{eq:def-C3}.
In addition, let $\bar{\vx}^{K}=\frac{\sum_{k=1}^K\vx^{k}}{K}$ and $\bar{\vz}^{K}=\frac{\sum_{k=1}^K\vz^{k}}{K}$. Then
\begin{subequations}\label{eq:rate-str-cvx-obj-res}
\begin{align}
&\EE\big|f_0(\bar{\vx}^{K})-f_0(\vx^*)\big| \le \frac{1}{K}\left(2\phi_3(\vx^*) + \frac{9\log(K+1)}{\rho}\|\vz^*\|^2\right),\\
& \EE\left[\frac{1}{M}\sum_{j=1}^M[f_j(\bar{\vx}^{K})]_+\right]\le\frac{1}{K}\left(\phi_3(\vx^*) + \frac{\log(K+1)}{\rho}\|\vone+\vz^*\|^2\right).
\end{align}
\end{subequations}
\end{theorem}

\begin{proof}
Let $\alpha_k = \frac{\alpha}{k+1},\,\forall\, k\ge 1$. Since $\alpha\ge\frac{1}{\mu}$, it holds $\frac{k+1}{\alpha}\ge\frac{k+2}{\alpha}-\mu$, i.e., $\frac{1}{\alpha_k}\ge \frac{1}{\alpha_{k+1}}-\mu$. Hence, summing up \eqref{eq:1iter-bd} with $\vx=\vx^*$ from $k=1$ through $K$, using Lemma \ref{lem:bd-cross-str}, and noting $\vz^1=\vzero$ and the choice of $\rho_k$ yield
\begin{align}\label{eq:str-cvx-bd-z1-0}
&~\sum_{k=1}^K\EE\left[f_0(\vx^k)+\frac{1}{M}\sum_{j=1}^M z_j f_j(\vx^k)\right]+\frac{1}{2\alpha_K}\EE\|\vx^{K+1}-\vx^*\|^2+\frac{1}{2\rho_K}\EE\|\vz^{K+1}-\vz\|^2\nonumber\\
\le &~\sum_{k=1}^K\EE\left[f_0(\vx^*)+\Psi_{\beta}(\vx^*,\vz^k)\right]+\left(\frac{1}{2\alpha_1}-\frac{\mu}{2}\right)\|\vx^{1}-\vx^*\|^2+\frac{1}{\rho_1}\EE\|\vz\|^2\nonumber\\
&~+\sum_{k=1}^K\alpha_k\left(\sigma^2 + 2\beta^2 F^2 G^2 + \frac{2G^2}{M}\EE\|\vz^k\|^2\right){ -\sum_{k=1}^K\frac{1}{2\rho_k}\left(\frac{\beta }{\rho_k}-2\right)\EE\|\vz^{k+1}-\vz^k\|^2}\nonumber\\
\le & ~Kf_0(\vx^*)+\EE\left[\phi_3(\vx^*)+\frac{1}{\rho_1}\|\vz\|^2\right],
\end{align}
where in the first inequality, { we have used the fact $\EE\big\langle \vx^{k}-\vx^*, \vg_0^k-\tilde{\nabla}f_0(\vx^k)\big\rangle=0$ and $\EE\big\langle \vx^{k}-\vx^*, \vh^k-\nabla_\vx\Psi_\beta(\vx^k,\vz^k)\big\rangle=0$}, and in the second inequality, we have used \eqref{eq:bd-z-str-cvx} and \eqref{eq:est-k-logk}, Lemma \ref{lem:psi-neg}, the setting $\beta\ge 2\rho_k, \,\forall\,k$, and also the definition of $\phi_3$ in \eqref{eq:def-phi3}.
Let $\vz=\vz^*$ in the above inequality. Then by \eqref{eq:opt-cond}, we have that
\begin{align*}
\frac{1}{2\alpha_K}\EE\|\vx^{K+1}-\vx^*\|^2
\le \phi_3(\vx^*)+\frac{1}{\rho_1}\|\vz^*\|^2,
\end{align*}
which clearly implies \eqref{eq:rate-nonerg} by the parameters given in \eqref{eq:para-str-cvx} and also $\alpha_K=\frac{\alpha}{K+1}$. 

Furthermore, dropping the terms about $\|\vx^{K+1}-\vx^*\|^2$ and $\|\vz^{K+1}-\vz\|^2$ on the left hand side of \eqref{eq:str-cvx-bd-z1-0}, and using the convexity of $f_j$'s, we have for any $\vz\ge\vzero$ that
\begin{align*}
\textstyle\EE\left[f_0(\bar\vx^K)-f_0(\vx^*)+\frac{1}{M}\sum_{j=1}^M z_j f_j(\bar\vx^K)\right]\le  \frac{1}{K}\EE\left[\phi_3(\vx^*)+\frac{1}{\rho_1}\|\vz\|^2\right].
\end{align*}
Now using Lemma \ref{lem:pre-rate} and Remark \ref{rm:rate-obj-res}, we obtain the desired results.
\end{proof}

\begin{remark}\label{rm:scvx-rate}
The order of the established rate is worse than the optimal one obtained for a primal SGM by a $\log(K+1)$ factor. That term appears essentially because of the setting of $\rho_k$ to bound the dual iterate. If we assume $\{\vz^k\}$ to be bounded, then we can set $\rho_k=\frac{\beta}{2}$ and remove the logarithmic term.
Furthermore, if the maximum number $K$ of iteration is not given, we can set $$\vD_k = \frac{k+1}{\alpha}\vI,\, \rho_k=\frac{\rho}{\log(k+1)},\, \beta\ge\frac{2\rho}{\log 2},\,\forall k,$$
with $\alpha\ge \frac{1}{\mu}$. These parameters satisfy the conditions in Proposition \ref{prop:bd-z-prelim}, and thus we can still have a sublinear convergence result through first bounding $\EE\|\vz^k\|^2$. However, there will be an additional $\log(K+1)$ term in the obtained result, i.e., $O\big([\log(K+1)]^2/(K+1)\big)$ for any positive integer $K$. The result can be shown by following the proofs of Proposition \ref{prop:str-cvx} and Theorem \ref{thm:rate-str-cvx}. We leave it to the interested readers. 
\end{remark}

{ 
\section{Convergence analysis of the adaptive method}\label{sec:analysis-adp}
In this section, we analyze Algorithm \ref{alg:pd-sg} with the adaptive Setting \ref{set:adp} for $\vD_k$'s. For simplicity and also due to the page limitation, we only consider the convex case with pre-determined maximum number of iterations. For the convex case with varying maximum number of iterations and the strongly convex case, we can have similar results as those in section \ref{sec:analysis}. 

Similar to the analysis in the previous section, we first bound $\EE\|\vz^k\|^2$ as follows. Its proof is given in the appendix.

\begin{proposition}\label{prop:adp}
Assume that $X$ is bounded and also Assumptions \ref{assump:kkt} through \ref{assump:cvx} hold. Given a positive integer $K$, let $\alpha>0$ and $\rho>0$ such that $\alpha\rho < \frac{M}{8G^2}$, and let
\begin{equation}\label{eq:para-adp}
\alpha_k= \frac{\alpha}{\sqrt{K}},\, \rho_k = \frac{\rho}{\sqrt{K}},\, \beta \ge \rho, \,\forall 1\le k\le K.
 \end{equation}
Suppose that $\{(\vx^k,\vz^k)\}$ is generated from Algorithm \ref{alg:pd-sg} with $\vD_k$ set according to Setting \ref{set:adp} and all other parameters specified in \eqref{eq:para-adp}. Then for any $\vx\in X$, we have
\begin{equation}\label{eq:bd-sum_Dk-3}
\frac{1}{2}\sum_{k=1}^K\big(\|\vx^k-\vx\|_{\vD_k}^2-\|\vx^{k+1}-\vx\|_{\vD_k}^2\big)\le \frac{\sqrt K}{2\alpha}\|\vx^1-\vx\|^2 + \frac{\eta B^2\sqrt{nK}}{2}.
\end{equation}
In addition, for any $1\le k\le K+1$, it holds that
\begin{equation}\label{eq:bd-z-adp}
\EE\|\vz^k\|^2 \le \frac{C_4}{1- \frac{8\alpha\rho G^2}{M}}.
\end{equation}
Here $B = \max_{\vx_1,\vx_2\in X}\|\vx_1-\vx_2\|_\infty,$ and
\begin{equation}\label{eq:def-C4}
C_4= \frac{2\rho}{\alpha}\|\vx^1-\vx^*\|^2 + 2\rho \eta B^2\sqrt{n}+4\|\vz^*\|^2 + 4\alpha\rho\big(\sigma^2 + 2\beta ^2 F^2 G^2\big).
\end{equation}
\end{proposition}

By the above proposition, we have the convergence rate estimate of Algorithm \ref{alg:pd-sg} with the adaptive Setting \ref{set:adp} about $\vD_k$'s.
\begin{theorem}\label{thm:adp}
Under Assumptions \ref{assump:kkt} through \ref{assump:cvx}, let $\{(\vx^k,\vz^k)\}$ be generated from Algorithm \ref{alg:pd-sg} with $\vD_k$ set according to Setting \ref{set:adp}. Given any positive integer $K$, set the parameters according to \eqref{eq:para-adp}, let $\bar{\vx}^{K}=\frac{1}{K}\sum_{k=1}^K\vx^{k}$ and $\bar\vz^K=\frac{1}{K}\sum_{k=1}^K\vz^{k}$, and define
\begin{equation}\label{eq:def-phi4}
\phi_4(\vx)=\frac{3}{2\alpha}\|\vx^{1}-\vx\|^2+\frac{\eta B^2\sqrt{n}}{2}+\alpha\left(\frac{3}{2}\sigma^2 + 3\beta^2 F^2 G^2 + \frac{3G^2}{M}\frac{C_4}{1-\frac{8\alpha\rho G^2}{M}}+\frac{F^2}{2}\right),
\end{equation}
where $C_4$ is defined in \eqref{eq:def-C4}.
If $X$ is bounded, then
\begin{subequations}\label{eq:rate-adp}
\begin{align}
&\EE\big|f_0(\bar{\vx}^{K})-f_0(\vx^*)\big| \le \frac{1}{\sqrt{K}} \left(2\phi_4(\vx^*)+ \frac{9(\alpha+\rho)}{2\alpha\rho}\|\vz^*\|^2\right),\label{eq:rate-adp-obj}\\
&\EE\left[\frac{1}{M}\sum_{j=1}^M[f_j(\bar{\vx}^{K})]_+\right] \le   \frac{1}{\sqrt{K}}\left(\phi_4(\vx^*) + \frac{\alpha+\rho}{2\alpha\rho}\|\vone+\vz^*\|^2\right).\label{eq:rate-adp-res}\\
&\EE\big[d_\beta(\vz^*)-d_\beta(\bar\vz^K)\big]\le \frac{3}{2\sqrt{K}}\left(\max_{\vx\in X}\phi_4(\vx)+\frac{3(\alpha+\rho)}{2\alpha\rho}\|\vz^*\|^2\right).\label{eq:rate-adp-dual}
\end{align}
\end{subequations}
\end{theorem}

\begin{proof}
Multiply $\alpha_k=\frac{\alpha}{\sqrt K}$ to \eqref{eq:1iter-bd}, sum it up from $k=1$ through $K$, use \eqref{eq:bd-cross-1} through \eqref{eq:bd-cross-3} and also \eqref{eq:bd-sum_Dk-3}, and note $\vz^1=\vzero$. Then we have from \eqref{eq:bd-z-adp} that 
\begin{align*}
\frac{\alpha}{\sqrt{K}}\sum_{k=1}^K \EE\left[f_0(\vx^k)+\frac{1}{M}\sum_{j=1}^M z_j f_j(\vx^k)\right]\le &~\frac{\alpha}{\sqrt{K}}\sum_{k=1}^K\EE\left[f_0(\vx^*)+\Psi_{\beta}(\vx,\vz^k)\right]+\frac{\alpha \eta B^2\sqrt{n}}{2}\nonumber\\
&\hspace{-6.2cm}+\frac{3}{2} \EE\|\vx^{1}-\vx\|^2+\left(\frac{\alpha}{2\rho}+\frac{1}{2}\right)\EE\|\vz\|^2 +\alpha^2\left(\frac{3}{2}\sigma^2 + 3\beta^2 F^2 G^2+ \frac{3G^2}{M}\frac{C_4}{1-\frac{8\alpha\rho G^2}{M}} + \frac{F^2}{2}\right)\nonumber.
\end{align*}
Now the desired results can be obtained by following the same arguments as those in the proof of Theorem \ref{thm:rate-cvx-fix}.
\end{proof}

\begin{remark}
From the proofs of Proposition \ref{prop:adp} and Theorem \ref{thm:adp}, we see that the inequality \eqref{eq:bd-sum_Dk-3} is important to bound $\EE\|\vz^k\|^2$ and to have the convergence rate results. In addition, while proving \eqref{eq:bd-sum_Dk-3}, we use the bound $\|\vs^k\|=O(\sqrt{k})$. Since we scale the SGs in Setting \ref{set:adp}, we automatically have such a bound. Without the scaling process, we may not have it unless we assume the dual variable to be bounded.
\end{remark}
}

\section{Numerical experiments}\label{sec:numerical}

In this section, we test the proposed method (named PDSG) on solving a sample approximation problem of the robust portfolio selection (RPS) and also three quadratically constrained quadratic programs (QCQP). We compare to the stochastic mirror-prox method in \cite{juditsky2011solving} and the CSA method in \cite{lan2016algorithms-exp-cont}. { The RPS test is performed in MATLAB 2016a installed on a Macbook Pro with 8 gigabyte memory, while the QCQP test is in MATLAB 2018a installed on a Dell workstation with 32 gigabyte memory.}

\subsection{Sample approximation of robust portfolio selection}
Suppose that one investor has a unit of capital to invest on $n$ assets. Assume the return rate of the $i$-th asset follows a uniform distribution on $[\mu_i-\sigma_i, \mu_i+\sigma_i]$ for each $i\in [n]$. The RPS aims to maximize the expected return subject to a minimum return $c$ for all possible return rate, i.e., 
\begin{equation}\label{eq:rps}
\max_{\vx\in X} \bm{\mu}^\top \vx, \st  \sum_{i=1}^n \xi_i x_i \ge c, \forall \xi_i\in [\mu_i-\sigma_i, \mu_i+\sigma_i], \forall i\in [n], 
\end{equation}  
where $X=\{\vx: \vx\ge\vzero, \sum_{i=1}^n x_i = 1\big\}$.
It is easy to see that the above robust constraint is equivalent to $\sum_{i=1}^n (\mu_i-\sigma_i)x_i\ge c$, and thus \eqref{eq:rps} can be equivalently formulated as a linear program with only two linear constraints and also the nonnegativity constraint. 

Now suppose that the distribution of the return rate $\bm{\xi}$ is unknown but its samples are available. Let $\{\bm{\xi}_1,\ldots,\bm{\xi}_M\}$ be $M$ samples of $\bm{\xi}$ and $\bar{\bm{\mu}}$ be the empirical mean. Then we can solve a sample approximation of \eqref{eq:rps}, i.e.,
\begin{equation}\label{eq:rps-app}
\max_{\vx\in X} \bar{\bm{\mu}}^\top \vx, \st  \bm{\xi}_j^\top \vx \ge c, \forall j\in [M]. 
\end{equation} 
The sample approximation problem is still a linear program, and one can apply any linear program solver. We use the proposed method in this test simply to see if it can numerically perform well. We set $n=10$ and $M=10^4$. All entries of $\bar{\bm{\mu}}$ are generated independently following the uniform distribution on $[1,2]$. For each $j\in [M]$, we set $\bm{\xi}_j=\bar{\bm{\mu}}+\bm{\zeta}_j$ with $\bm{\zeta}_j$ generated by uniform distribution on $[-0.5,0.5]^n$. Then we let 
$c=0.9\min_{j\in[M], \vx\in X}\bm{\xi}_j^\top \vx$
to ensure that \eqref{eq:rps-app} has a strict feasible solution. The parameters of our algorithm are set according to \eqref{eq:para-fix-iter} with $K=10^6$, and $\alpha=\rho=\beta=1$. The initial point is randomly generated. Figure \ref{fig:rspf} shows the distance of objective value to the optimal value, the averaged constraint violation, and also the maximum constraint violation, where the optimal objective value is obtained by MATLAB's built-in function \verb|linprog|. The feasibility curves only show the first 5,000 iterations, after which the points remain feasible. { We also test the mirror-prox method \cite{juditsky2011solving} and the CSA method \cite{lan2016algorithms-exp-cont} and find that they perform almost the same as our method on this simple example.}

\begin{figure}[h]
\begin{center}
\includegraphics[width=0.325\textwidth]{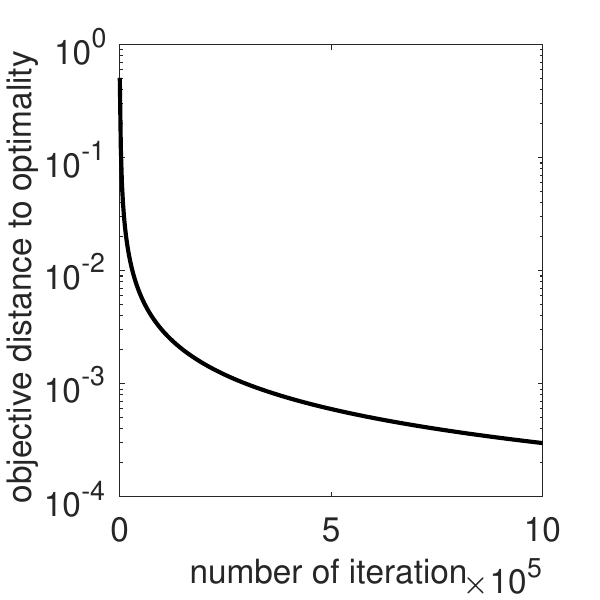}
\includegraphics[width=0.325\textwidth]{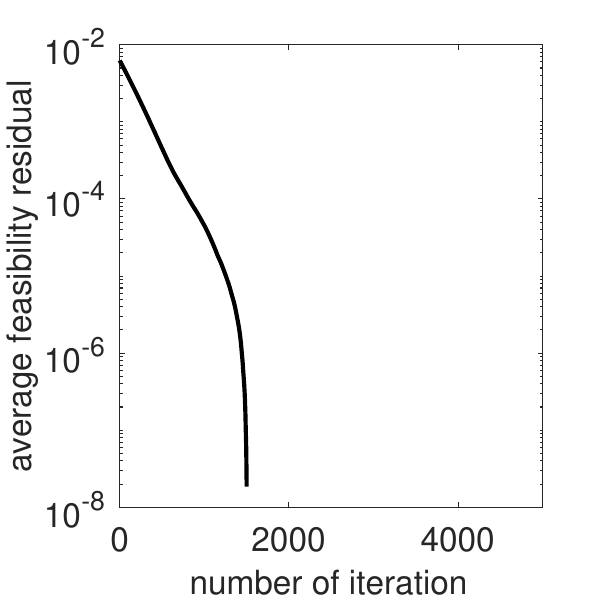}
\includegraphics[width=0.325\textwidth]{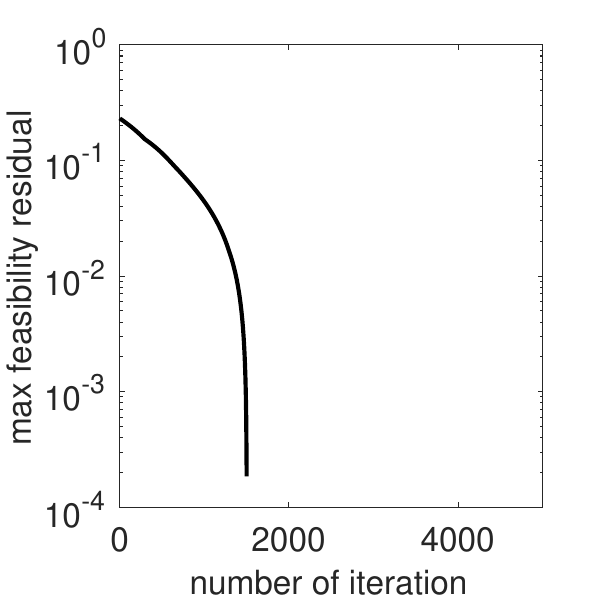}
\end{center}
\caption{Results given by Algorithm \ref{alg:pd-sg} with nonadaptive setting on solving an instance of the sample approximation \eqref{eq:rps-app} of the robust portfolio selection. Left: the distance of objective value at averaged point to optimal value $|f_0(\bar\vx^k)-f_0(\vx^*)|$; Middle: the average constraint violation at averaged point $\frac{1}{M}\sum_{j=1}^M [f_j(\bar\vx^k)]_+$; Right: the maximum constraint violation at averaged point $\max_{j\in [M]} [f_j(\bar\vx^k)]_+$.}\label{fig:rspf}
\end{figure}

\subsection{Quadratically constrained quadratic program}
In this subsection, we test the proposed method on a finite-sum structured quadratic program with many quadratic constraints, i.e., 
\begin{equation}\label{eq:cvx-qcqp}
\min_{\vx\in X} \frac{1}{2N}\sum_{i=1}^N\|\vH_i \vx - \vc_i\|^2, \st \frac{1}{2}\vx^\top \vQ_j\vx + \va_j^\top \vx \le b_j, j=1,\ldots, M.
\end{equation}
Here $X=[-10,10]^n$; for each $i\in [N]$, $\vH_i\in\RR^{p\times n}$ and $\vc_i$ are randomly generated with components independently following standard Gaussian distribution; the entries of every $\va_j$ also follow standard Gaussian distribution; $\vQ_j$'s are randomly generated symmetric positive semidefinite matrices; each $b_j$ is generated according to uniform distribution on $[0.1,1.1]$. Note that for the generated data, the Slater's condition holds, and thus there must exist a KKT point for \eqref{eq:cvx-qcqp}. Let $\xi$ be a random variable with uniform distribution on $[N]$. Then the objective of  \eqref{eq:cvx-qcqp} can be written to $\EE_\xi \frac{1}{2}\|\vH_\xi\vx-\vc_\xi\|^2$, and thus \eqref{eq:cvx-qcqp} is in the form of \eqref{eq:ccp}. 

In the experiment, { we test on three QCQP instances of different size. For all of them, we set $N=M=10^4$ in \eqref{eq:cvx-qcqp}, and the dimension $(n,p)$ is set to $(10,5)$, $(200, 150)$, and $(400,350)$ respectively for the three instances. 
We test the proposed algorithm with both nonadaptive and adaptive settings.} For the nonadaptive one, we set algorithm parameters according to \eqref{eq:para-fix-iter} with $K=50,000$, $\alpha=\rho=\sqrt{10}$, and $\beta=1$, and it is named as \verb|PDSG-nonadp|. 
{ For the adaptive method, i.e., $\vD_k$ given according to Setting \ref{set:adp}, we set $\eta = \frac{1}{\sqrt{10}}$ and the other parameters according to \eqref{eq:para-adp} with $K=50,000$, $\alpha = 10$, $\rho = \sqrt{10}$, and $\beta=1$, and we name it as \verb|PDSG-adp|.} The stochastic mirror-prox method \cite{juditsky2011solving} with update given in \eqref{eq:sd-update} is applied on the equivalent saddle-point problem \eqref{eq:sdl-prob}. Although the mirror-prox method requires a compact $Z$, we simply set $Z=\RR^M$, and the method still works well in this test. We use the same penalty parameter $\beta=1$ and the same step size $\alpha_k$ as for our nonadaptive method. Also we apply the CSA method  \cite{lan2016algorithms-exp-cont} with update given in \eqref{eq:csa-update}. The same step size $\alpha_k$ is used, and $\eta_k$ is set to $1/\sqrt{K}$ for all $k$. For all the tested methods, at each iteration, we sample 10 component functions in the objective and also 10 constraint functions to obtain an unbiased SG, i.e., mini-batch of size 10 is applied. Projecting onto the set $\{\vx: \frac{1}{2}\vx^\top \vQ\vx + \va^\top \vx\le b\}$ does not generally admit an analytic solution and requires an iterative method. Hence, the methods in \cite{wang2016stochastic-proj, ryu2017ppg} with updates \eqref{eq:alg-proj} and \eqref{eq:sppg-update} could be inefficient on solving the QCQP problem and are not compared.  

Figure \ref{fig:qcqp} shows the results for each method on the three QCQP instances, including the objective error, average constraint violation, and also maximum constraint violation with respect to epoch, where the ``optimal'' solution is computed by running \verb|PDSG-adp| to 1,000 epochs for the smallest instance and 500 epochs for another two.  
{ Table \ref{table:time} shows the running time (in second) of each method. Since all the tested methods have almost the same per-iteration complexity, their total running times are almost the same. The very long time for the largest instance is because the data size in this instance almost reaches the limit of machine memory.} From the results, we see that the proposed algorithm performs significantly better than the stochastic mirror-prox and CSA methods. In addition, { the adaptive PDSG is significantly better than the nonadaptive one. Note that we scale the SGs in the adaptive PDSG. Hence, with the parameters we set, the two PDSGs use roughly the same step size in this experiment. Therefore, the better performance of the adaptive method is mainly attributed to its different setting of $\vD_k$.} 

\begin{figure}[h]
\begin{center}
\includegraphics[width=0.325\textwidth]{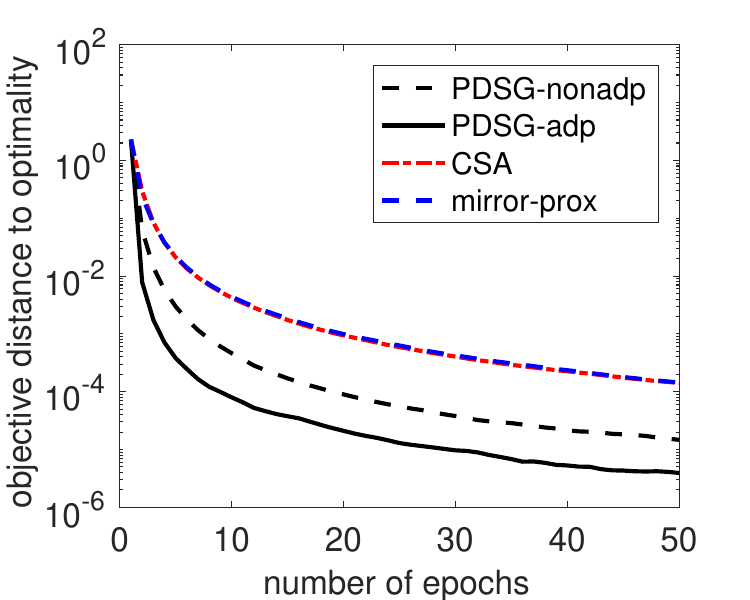}
\includegraphics[width=0.325\textwidth]{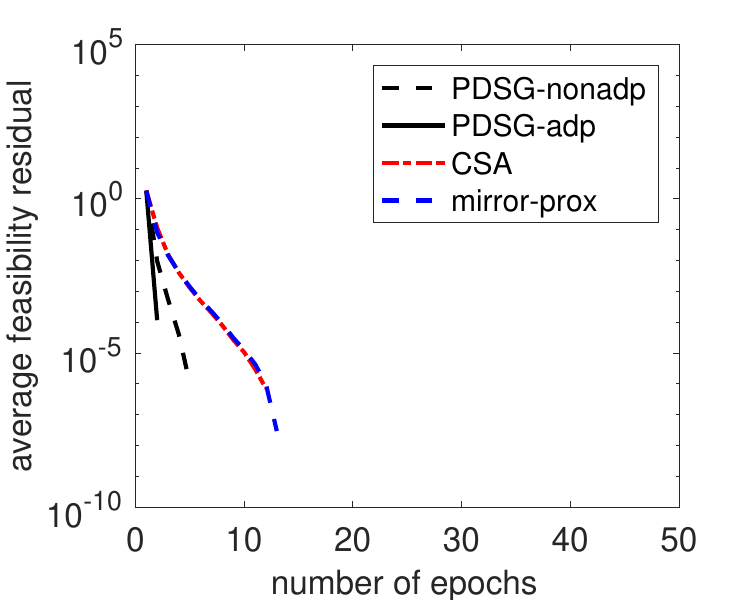}
\includegraphics[width=0.325\textwidth]{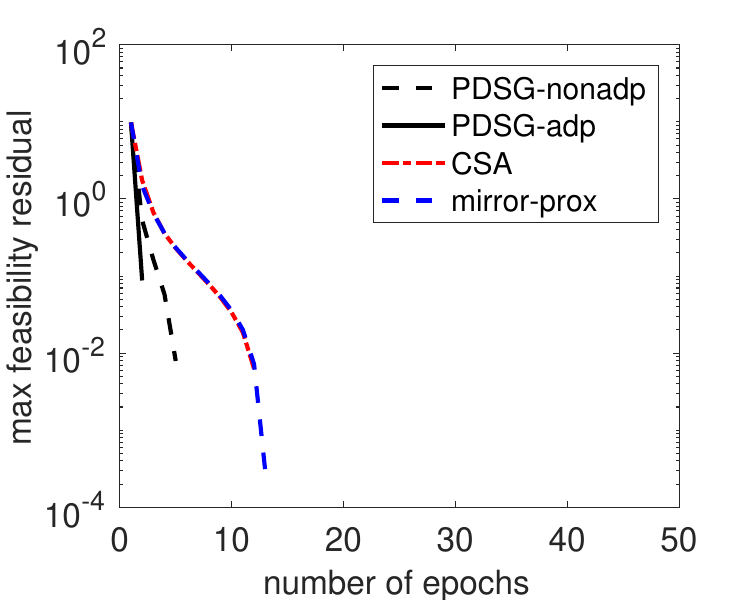}\\[0.1cm]
\includegraphics[width=0.325\textwidth]{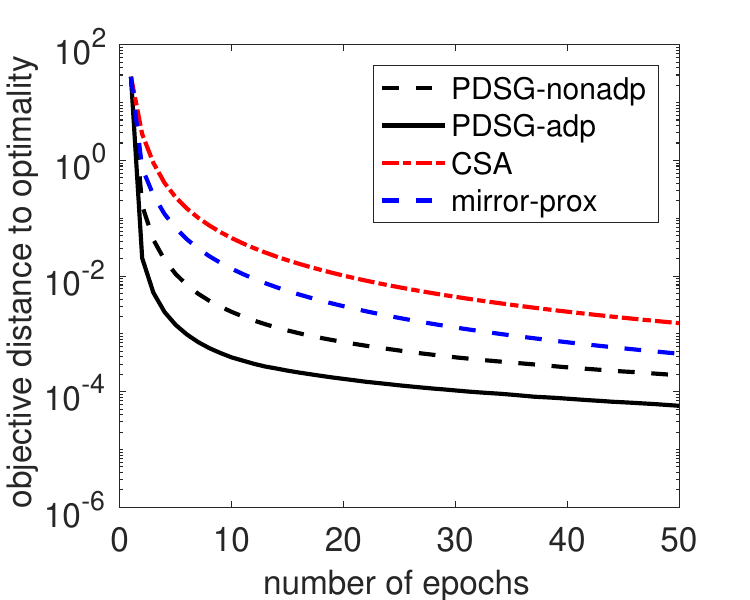}
\includegraphics[width=0.325\textwidth]{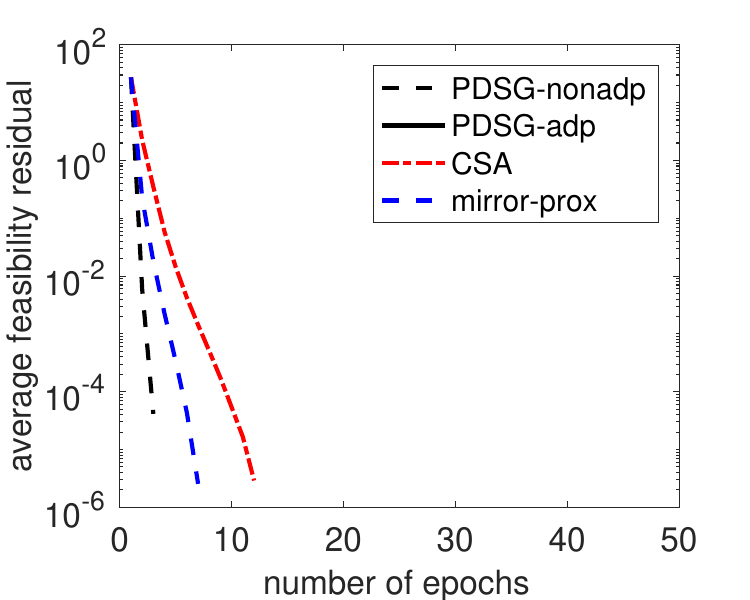}
\includegraphics[width=0.325\textwidth]{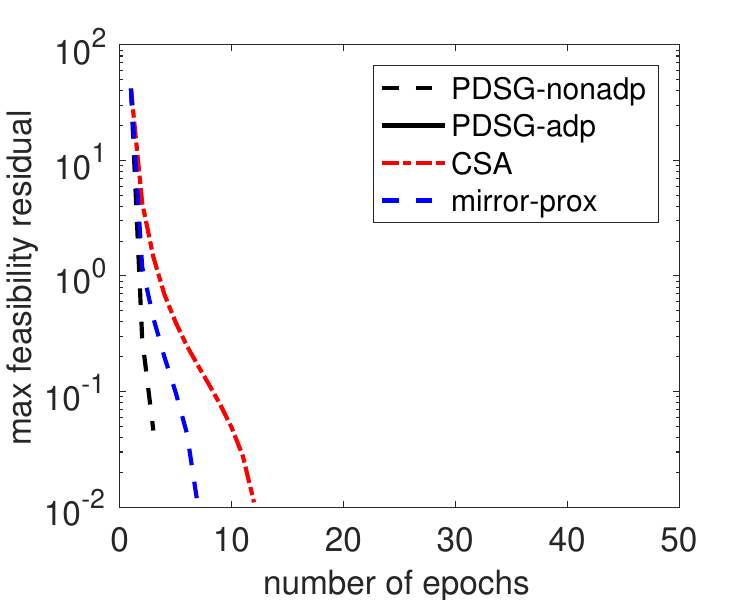}\\
[0.1cm]
\includegraphics[width=0.325\textwidth]{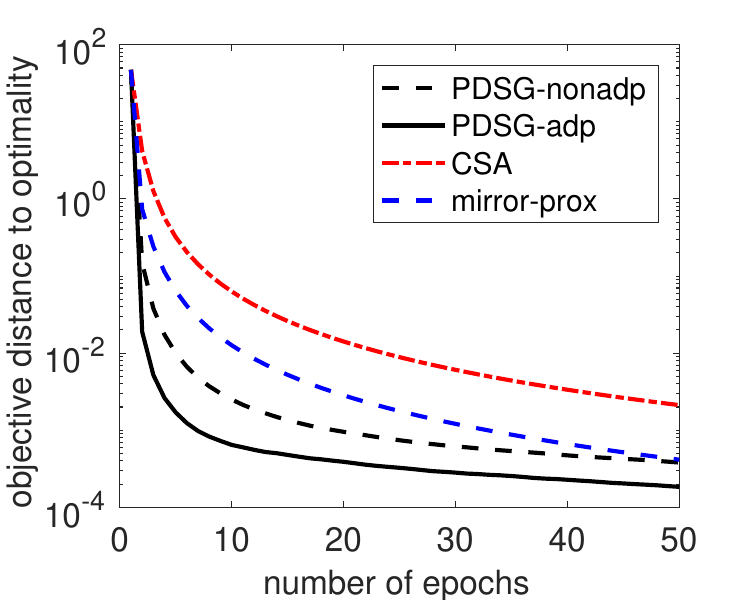}
\includegraphics[width=0.325\textwidth]{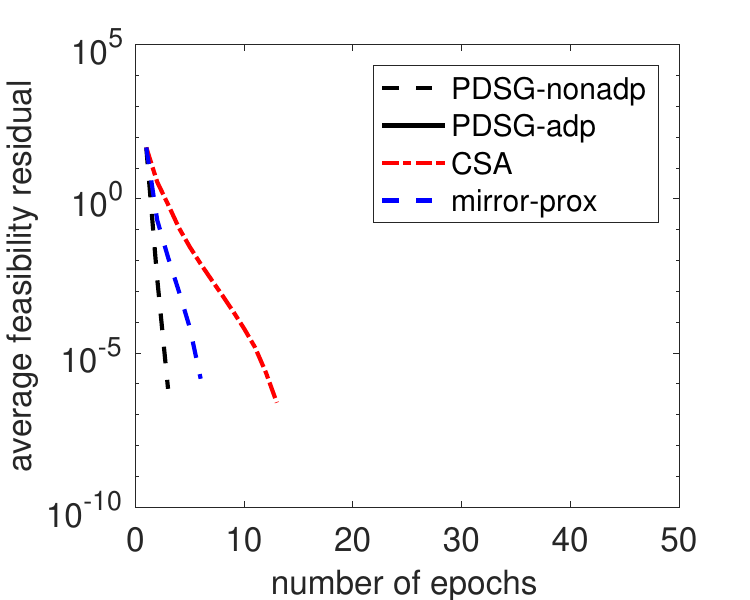}
\includegraphics[width=0.325\textwidth]{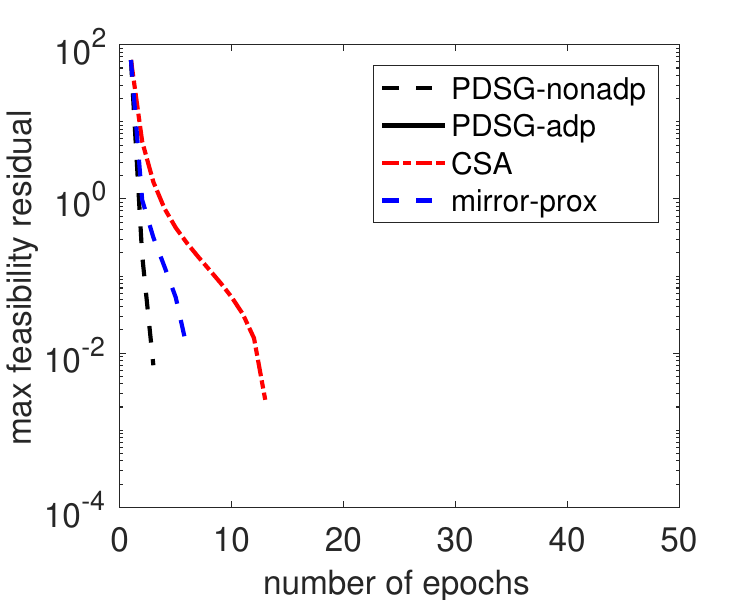}\\
\end{center}
\caption{Results given by Algorithm \ref{alg:pd-sg} with both nonadaptive and adaptive settings (named PDSG-nonadp and PDSG-adp), the stochastic mirror-prox method in \cite{juditsky2011solving}, and the CSA method in \cite{lan2016algorithms-exp-cont} on solving three instances of the quadratically constrained quadratic programming \eqref{eq:cvx-qcqp}, each instance with $N=M=10,000$. Left: the distance of objective value at averaged point to optimal value $|f_0(\bar\vx^k)-f_0(\vx^*)|$; Middle: the average constraint violation at averaged point $\frac{1}{M}\sum_{j=1}^M [f_j(\bar\vx^k)]_+$; Right: the maximum constraint violation at averaged point $\max_{j\in [M]} [f_j(\bar\vx^k)]_+$. First row: dimension $n=10, p=5$; Second row: dimension $n=200,p=150$; Last row: dimension $n=400,p=350$.}\label{fig:qcqp}
\end{figure}

\begin{table}[h]
\begin{center}
{\small\begin{tabular}{|c||cccc|}
\hline
\backslashbox{Dimension}{Method}& PDSG-nonadp & PDSG-adp & CSA & mirror-prox \\\hline
$n=10, p=5$ & 20.20  &20.69  &20.56  &20.32\\\hline\hline
$n=200,p=150$ & 248.94  &239.56  &250.01  &244.82\\\hline\hline
$n=400,p=350$ & 20129.59  &20044.41  &20118.03  &20161.85\\\hline
\end{tabular}
}
\end{center}
\caption{Running time (in second) for each compared method on three instances tested in Figure \ref{fig:qcqp}.}\label{table:time}
\end{table}

\section{Conclusions}\label{sec:conclusion}
We have proposed a primal-dual { (adaptive)} stochastic gradient method for stochastic programming with many functional constraints. Every iteration, the method only needs a stochastic subgradient of the objective, and a subgradient and the function value of one randomly sampled constraint function. 
Under standard assumptions, we have established its convergence rate for both convex and strongly convex problems. The order of rate is optimal for convex case and nearly optimal for strongly convex case. Numerical experiments on a sample approximation problem of the robust portfolio selection and  
quadratically constrained quadratic programming demonstrate its nice practical performance.

{ 
\section*{Acknowledgements} The author would like to thank the two anonymous referees for their constructive comments and suggestions, which greatly improve the paper. In particular, he very much appreciates the careful checking from one of them, who pointed out one technical mistake in the first submission. The author also would like to thank Professor Wotao Yin for his valuable discussions.
}

\appendix
{\small
\section{Proofs of Propositions}

\subsection{Proof of Proposition \ref{prop:bd-z-prelim}}
Let $(\vx,\vz)=(\vx^*,\vz^*)$ in \eqref{eq:1iter-bd}. {\color{blue}Then the last three expectation terms vanish.} Since $\rho_k\le \beta$, we have by the definition of $\Phi$ in \eqref{eq:def-Phi} and Lemma \ref{lem:psi-neg} that
\begin{align*}
&~\textstyle\EE\left[\Phi(\vx^k;\vx^*,\vz^*)\right]+\frac{1}{2}\EE\|\vx^{k+1}-\vx^*\|_{\vD_k}^2+\frac{1}{2\rho_k}\EE\|\vz^{k+1}-\vz^*\|^2\nonumber\\
\le &~\textstyle\frac{1}{2}\EE \|\vx^{k}-\vx^*\|_{\vD_k-\mu\vI}^2+\frac{1}{2\rho_k}\EE\|\vz^k-\vz^*\|^2+\alpha_k\left(\sigma^2 + 2\beta ^2 F^2 G^2 + \frac{2G^2}{M}\EE\|\vz^k\|^2\right).\nonumber
\end{align*}
Multiplying $2\rho_k$ to both sides of the above inequality gives
\begin{align*}
&~2\rho_k\EE\big[\Phi(\vx^{k}; \vx^*,\vz^*)\big]+\rho_k\EE\|\vx^{k+1}-\vx^*\|_{\vD_k}^2+\EE\|\vz^{k+1}-\vz^*\|^2\\
\le &~\rho_k\EE \|\vx^{k}-\vx^*\|_{\vD_k-\mu\vI}^2+\EE\|\vz^k-\vz^*\|^2+2\alpha_k\rho_k\left(\sigma^2 + 2\beta ^2 F^2 G^2 + \frac{2G^2}{M}\EE\|\vz^k\|^2\right).\nonumber
\end{align*}
Summing the above inequality from $k=1$ through $t$, we have by $\vz^1=\vzero$, noting $\Phi(\vx^{k}; \vx^*,\vz^*)\ge0,\,\forall k$ from \eqref{eq:opt-cond},  and using the condition in \eqref{eq:parameters-cond} that
\begin{align*}
&~\EE\|\vz^{t+1}-\vz^*\|^2\\
\le &\textstyle~\rho_1\|\vx^1-\vx^*\|_{\vD_1-\mu\vI}^2+\|\vz^*\|^2+\sum_{k=1}^t 2\alpha_k\rho_k\left(\sigma^2 + 2\beta ^2 F^2 G^2 + \frac{2G^2}{M}\EE\|\vz^k\|^2\right).
\end{align*}
From the Young's inequality, it follows that $\|\vz^{t+1}\|^2 \le 2\|\vz^{t+1}-\vz^*\|^2 + 2\|\vz^*\|^2$, which together with the above inequality gives the desired result.

\subsection{Proof of Proposition \ref{prop:fix-iter}}
Let $\alpha_k=\frac{\alpha}{K},\forall 1\le k\le K$. It is easy to see that the parameters given in \eqref{eq:para-fix-iter} satisfy the conditions in Proposition \ref{prop:bd-z-prelim}. Hence, for any $t\le K$, it follows from \eqref{eq:bd-z-unified} that
\begin{align}\label{eq:bd-z-1}
\EE\|\vz^{t+1}\|^2
\le \frac{2\rho}{\alpha}\|\vx^{1}-\vx^*\|^2+4\|\vz^*\|^2+4\alpha\rho\big(\sigma^2+2\beta^2 F^2G^2\big)+\frac{8\alpha\rho G^2}{MK}\sum_{k=1}^t\EE\|\vz^k\|^2.
\end{align}
Now we show the result in \eqref{eq:bd-z} by induction. Since $\vz^1=\vzero$, \eqref{eq:bd-z} holds trivially for $k=1$. Assume it holds for $k\le t$. Then from \eqref{eq:bd-z-1}, it follows that
$$
\EE\|\vz^{t+1}\|^2
\le C_1+\frac{8\alpha\rho G^2}{MK}\sum_{k=1}^t\frac{C_1}{1-\frac{8\alpha\rho G^2}{M}} \le \frac{C_1}{1-\frac{8\alpha\rho G^2}{M}},
$$
which completes the proof. 

\subsection{Proof of Proposition \ref{prop:nfix-iter}}

Let $\alpha_k=\frac{\alpha}{\sqrt{k+1}\log (k+1)},\forall k\ge 1$. It is easy to see that the parameters given in \eqref{eq:para-nonfix-iter} satisfy the conditions in Proposition \ref{prop:bd-z-prelim}. Hence, plugging the specified parameters into \eqref{eq:bd-z-unified} gives
\begin{align}\label{eq:1iter-bd-nfix}
\EE\|\vz^{t+1}\|^2\le &~ \textstyle\frac{2\rho}{\alpha}\|\vx^1-\vx^*\|^2+4\|\vz^*\|^2\nonumber\\
&~\textstyle+\sum_{k=1}^t \frac{4\alpha\rho}{(k+1)(\log (k+1))^2}\left(\sigma^2+2\beta^2 F^2 G^2+\frac{2G^2}{M}\EE\|\vz^k\|^2\right).
\end{align}
By 
\begin{align}\label{eq:est-sum-logk}
\sum_{k=1}^\infty \frac{1}{(k+1)(\log (k+1))^2} \le & ~\frac{1}{2(\log 2)^2}+\int_1^{\infty}\frac{1}{(x+1)(\log(x+1))^2}dx \nonumber\\
= &~ \frac{1}{2(\log 2)^2} + \frac{1}{\log 2} \le 2.5, 
\end{align}
we have from \eqref{eq:1iter-bd-nfix} that
\begin{align}\label{eq:1iter-bd-nfix-2}
\EE\|\vz^{t+1}\|^2\le &~ \frac{2\rho}{\alpha}\|\vx^1-\vx^*\|^2+4\|\vz^*\|^2+10\alpha\rho\big(\sigma^2+2 \beta^2 F^2 G^2\big)\nonumber\\ 
&~+ \sum_{k=1}^t \frac{8\alpha\rho}{(k+1)(\log (k+1))^2} \frac{G^2}{M}\EE\|\vz^k\|^2.
\end{align}
Now we show the result in \eqref{eq:bd-z-nfix} by induction. When $k=1$, it obviously holds. Assume the result holds for $k\le t$. Then from \eqref{eq:1iter-bd-nfix-2}, it follows that
\begin{align*}
\EE\|\vz^{t+1}\|^2
\le &~ C_2+\sum_{k=1}^t \frac{8\alpha\rho}{(k+1)(\log (k+1))^2} \frac{G^2}{M}\frac{C_2}{1-\frac{20\alpha\rho G^2}{M}}\\
\le &~  C_2+\frac{20\alpha\rho G^2}{M} \frac{C_2}{1-\frac{20\alpha\rho G^2}{M}} = \frac{C_2}{1-\frac{20\alpha\rho G^2}{M}},
\end{align*}
where the second inequality uses \eqref{eq:est-sum-logk}. This completes the proof.

\subsection{Proof of Proposition \ref{prop:str-cvx}} 

Let $\alpha_k = \frac{\alpha}{k+1},\,\forall\, k\ge 1$. If $\alpha \ge\frac{1}{\mu}$, then $\frac{k+1}{\alpha}\ge\frac{k+2}{\alpha}-\mu$, i.e., $\frac{1}{\alpha_k}\ge \frac{1}{\alpha_{k+1}}-\mu$. Hence, the parameters given in \eqref{eq:para-str-cvx} satisfy the condition in Proposition \ref{prop:bd-z-prelim}, thus \eqref{eq:bd-z-unified} holds and, with the specified parameters, becomes  
\begin{align}\label{eq:bd-z-str-cvx-ineq1}
\EE\|\vz^{t+1}\|^2
\le &~\textstyle\frac{2\rho}{\log(K+1)}\left(\frac{2}{\alpha}-\mu\right)\|\vx^1-\vx^*\|^2+4\|\vz^*\|^2\\
&~~~~+\textstyle\sum_{k=1}^t \frac{4\alpha\rho}{(k+1)\log(K+1)}\left(\sigma^2+2\beta^2F^2G^2+\frac{2G^2}{M}\EE\|\vz^k\|^2\right).\nonumber
\end{align}
Note that for any $t\le K$,
\begin{equation}\label{eq:est-k-logk}
\textstyle\sum_{k=1}^t\frac{1}{k+1}\le \int_1^{t+1}\frac{1}{x}dx=\log (t+1)\le \log(K+1).
\end{equation}
Hence, \eqref{eq:bd-z-str-cvx-ineq1} implies 
\begin{align}\label{eq:bd-z-str-cvx-ineq2}
\EE\|\vz^{t+1}\|^2
\le C_3+\sum_{k=1}^t \frac{4\alpha\rho}{(k+1)\log(K+1)}\frac{2G^2}{M}\EE\|\vz^k\|^2.
\end{align}
Now we show \eqref{eq:bd-z-str-cvx} by induction. When $k=1$, it obviously holds since $\vz^1=\vzero$. Assume \eqref{eq:bd-z-str-cvx} holds for any $k\le t\le K$. Then, from \eqref{eq:est-k-logk} and \eqref{eq:bd-z-str-cvx-ineq2}, it follows that
$$\EE\|\vz^{t+1}\|^2\le C_3+\frac{8\alpha\rho G^2}{M}\frac{C_3}{1-\frac{8\alpha\rho G^2}{M}}= \frac{C_3}{1-\frac{8\alpha\rho G^2}{M}},$$
which completes the proof.

\subsection{Proof of Proposition \ref{prop:adp}}
We first prove \eqref{eq:bd-sum_Dk-3}. Since $\vD_k=\diag(\vs^k)+\frac{\vI}{\alpha_k}$ and $\alpha_k = \frac{\alpha}{\sqrt K},\forall k$, we have for any $1\le t\le K$ that
\begin{align}\label{eq:bd-sum_Dk}
&~\textstyle\sum_{k=1}^t\big(\|\vx^k-\vx\|_{\vD_k}^2-\|\vx^{k+1}-\vx\|_{\vD_k}^2\big)\cr
\le&\textstyle~\|\vx^1-\vx\|_{\vD_1}^2 + \sum_{k=1}^{t-1}\big\langle \vx^{k+1}-\vx, (\vs^{k+1}-\vs^k)\odot (\vx^{k+1}-\vx)\big\rangle\cr
\le &\textstyle~ \frac{1}{\alpha_1}\|\vx^1-\vx\|^2 + B^2 \|\vs^1\|_1 + B^2\sum_{k=1}^{t-1} \|\vs^{k+1}-\vs^k\|_1\cr
=&\textstyle~\frac{1}{\alpha_1}\|\vx^1-\vx\|^2 +B^2 \|\vs^t\|_1,
\end{align}
where $B=\max_{\vx_1,\vx_2\in X}\|\vx_1-\vx_2\|_\infty$, and we have used the fact $s_i^k\ge 0$ and $s_i^{k+1}\ge s_i^k$ for all $i$ and $k$ to have the last equality. By the Cauchy-Schwarz inequality $\langle \vs^t, \vone\rangle \le \|\vone\|\cdot\|\vs^t\| = \sqrt{n}\|\vs^t\|$ and also noting $\|\vs^t\|\le \eta\sqrt{t}$ due to the scaling in Setting \ref{set:adp}, we have from \eqref{eq:bd-sum_Dk} that
\begin{equation}\label{eq:bd-sum_Dk-2}
\textstyle\sum_{k=1}^t\big(\|\vx^k-\vx\|_{\vD_k}^2-\|\vx^{k+1}-\vx\|_{\vD_k}^2\big)\le \frac{1}{\alpha_1}\|\vx^1-\vx\|^2 + \eta B^2\sqrt{nt}.
\end{equation}
Hence, \eqref{eq:bd-sum_Dk-3} holds.

Now let $(\vx,\vz)=(\vx^*,\vz^*)$ in \eqref{eq:1iter-bd} and sum it up from $k=1$ through $t\le K$. Note that the last three expectation terms in \eqref{eq:1iter-bd} vanish when $(\vx,\vz)=(\vx^*,\vz^*)$. Then by \eqref{eq:opt-cond} and Lemma \ref{lem:psi-neg}, and also since $\beta\ge \rho_k=\frac{\rho}{\sqrt{K}},\,\forall k$, we have
\begin{align*}
&~\textstyle\frac{1}{2}\sum_{k=1}^t \EE\|\vx^{k+1}-\vx^*\|_{\vD_k}^2 + \frac{\sqrt K}{2\rho}\EE\|\vz^{t+1}-\vz^*\|^2\cr
\le &~\textstyle \frac{1}{2}\sum_{k=1}^t \EE\|\vx^{k}-\vx^*\|_{\vD_k}^2 + \frac{\sqrt K}{2\rho}\|\vz^{1}-\vz^*\|^2 + \sum_{k=1}^t \alpha_k\left(\sigma^2 + 2\beta ^2 F^2 G^2 + \frac{2G^2}{M}\EE\|\vz^k\|^2\right),
\end{align*}
which together with \eqref{eq:bd-sum_Dk-2} by letting $\vx=\vx^*$ implies
\begin{align*}
\textstyle\frac{\sqrt K}{2\rho}\EE\|\vz^{t+1}-\vz^*\|^2 \le&~ \frac{1}{2\alpha_1}\|\vx^1-\vx^*\|^2 + \frac{\eta B^2\sqrt{nt}}{2}+ \frac{\sqrt K}{2\rho}\|\vz^{1}-\vz^*\|^2\\
&~\textstyle + \sum_{k=1}^t \alpha_k\left(\sigma^2 + 2\beta ^2 F^2 G^2 + \frac{2G^2}{M}\EE\|\vz^k\|^2\right).
\end{align*}
Since $\alpha_k = \frac{\alpha}{\sqrt K},\forall k$ and $\vz^1=\vzero$, multiplying $\frac{2\rho}{\sqrt K}$ to the above inequality and noting $t\le K$ gives
\begin{align*}
\textstyle\EE\|\vz^{t+1}-\vz^*\|^2 \le&~ \frac{\rho}{\alpha}\|\vx^1-\vx^*\|^2 + \rho \eta B^2\sqrt{n}+\|\vz^*\|^2\\
&~ \textstyle+ 2\alpha\rho\left(\sigma^2 + 2\beta ^2 F^2 G^2 + \frac{2G^2}{MK} \sum_{k=1}^t\EE\|\vz^k\|^2\right).
\end{align*}
Hence, by the Young's inequality $\|\vz^{t+1}\|^2 \le 2\|\vz^{t+1}-\vz^*\|^2 + 2\|\vz^*\|^2$, we have from the above inequality that
\begin{align*}
\textstyle\EE\|\vz^{t+1}\|^2 \le &~ \frac{2\rho}{\alpha}\|\vx^1-\vx^*\|^2 + 2\rho \eta B^2\sqrt{n}+4\|\vz^*\|^2\\
&~ \textstyle+ 4\alpha\rho\left(\sigma^2 + 2\beta ^2 F^2 G^2 + \frac{2G^2}{MK} \sum_{k=1}^t\EE\|\vz^k\|^2\right).
\end{align*}
Then following the same arguments as those in the end of the proof of Proposition \ref{eq:para-fix-iter}, we can show the results in \eqref{eq:bd-z-adp}.

\section{Proofs of a few lemmas}

\subsection{Proof of Lemma \ref{lem:z-term}}
Note $\nabla_{z_j}\psi_\beta(f_j(\vx),z_j) = \max\left(-\frac{z_{j}}{\beta}, f_{j}(\vx)\right)$. Then the update of $\vz$ can be written in the compact form
\begin{equation}\label{eq:update-z-compact}
\vz^{k+1}=\vz^k + M\rho_k \ve_{j_k}\odot \nabla_\vz \Psi(\vx^k,\vz^k),
\end{equation}
where $\odot$ denotes componentwise product. Hence,
\begin{align}\label{eq:cross-z}
\textstyle\frac{1}{\rho_k}\langle \vz^k-\vz, \vz^{k+1}-\vz^k\rangle= & ~ \big\langle\vz^k-\vz, \nabla_\vz \Psi(\vx^k,\vz^k) \big\rangle \\
&~\textstyle+ \left\langle\vz^k-\vz, M\ve_{j_k}\odot \nabla_\vz \Psi(\vx^k,\vz^k) - \nabla_\vz \Psi(\vx^k,\vz^k) \right\rangle.\nonumber
\end{align}
Let 
$$J_+^k=\big\{j\in [M]: \beta  f_j(\vx^{k}) + z_j^k \ge0 \big\},\quad J_-^k = [M]\backslash J_+^k.$$
Note that for $\vz\ge\vzero$ and any $j\in J_-^k$, it holds $z_j\big(f_j(\vx^{k})+\frac{z_j^k}{\beta }\big)\le 0$. Then from the definition of $\Psi_\beta$ in \eqref{eq:def-Psi}, one can directly verify that
\begin{align}\label{eq:sum-I12}
&~\textstyle-\Psi_{\beta }(\vx^{k},\vz^k)+\frac{1}{M}\sum_{j=1}^M z_j f_j(\vx^{k}) + \big\langle\vz^k-\vz, \nabla_\vz \Psi(\vx^k,\vz^k) \big\rangle\cr
=&~\textstyle-\frac{1}{M} \sum_{j\in J_+^k}\frac{\beta }{2}\big[f_j(\vx^{k})\big]^2-\frac{1}{M} \sum_{j\in J_-^k}\left[\frac{(z_j^k)^2}{2\beta }-z_j\big(f_j(\vx^{k})+\frac{z_j^k}{\beta }\big)\right]\cr
\le &~\textstyle-\frac{1}{M} \sum_{j\in J_+^k}\frac{\beta }{2}\big[f_j(\vx^{k})\big]^2-\frac{1}{M} \sum_{j\in J_-^k}\frac{(z_j^k)^2}{2\beta }.
\end{align}
 In addition, note 
$$\textstyle-\frac{1}{M} \sum_{j\in J_+^k}\frac{\beta }{2}\big[f_j(\vx^{k})\big]^2-\frac{1}{M} \sum_{j\in J_-^k}\frac{(z_j^k)^2}{2\beta }=-\frac{\beta }{2\rho_k^2}\EE\left[\|\vz^{k+1}-\vz^k\|^2\,\big|\, \cH^{k}\right].$$
Hence, we have the desired result by adding \eqref{eq:cross-z} to \eqref{eq:sum-I12} and using 
$$\textstyle\langle \vz^k-\vz, \vz^{k+1}-\vz^k\rangle=\frac{1}{2}\left[\|\vz^{k+1}-\vz\|^2-\|\vz^{k}-\vz\|^2-\|\vz^{k+1}-\vz^k\|^2\right].$$

\subsection{Proof of Lemma \ref{lem:bd-psi-term}}
For any $j\in [M]$, we have for some $\tilde\nabla f_j(\vx)\in\partial f_j(\vx)$ that
$$\tilde{\nabla}_\vx \psi_{\beta}(f_j(\vx), z_j)=[\beta f_j(\vx)+z_j]_+ \tilde{\nabla} f_j(\vx).$$
From Assumption \ref{assump:bd}, note that $\|\tilde{\nabla} f_j(\vx)\|\le G$ and $[\beta f_j(\vx)+z_j]_+^2 \le 2\beta^2 F^2 + 2 (z_j)^2$. Hence,
$$\|\tilde{\nabla}_\vx \psi_{\beta}(f_j(\vx), z_j)\|^2 \le [\beta f_j(\vx)+z_j]_+^2 \|\tilde{\nabla} f_j(\vx)\|^2\le 2 G^2(\beta^2 F^2 + (z_j)^2),$$
which implies the desired result.

\subsection{Proof of Lemma \ref{lem:bd-cross}}
First note that $\EE\big[\vg_0^k-\tilde{\nabla}f_0(\vx^k)\, \big|\, \cH_k\big] = \vzero$. Hence, if $\vx$ is deterministic, the result in \eqref{eq:bd-cross-1} trivially holds, and similarly if $(\vx,\vz)$ is deterministic, then the results in \eqref{eq:bd-cross-2} and \eqref{eq:bd-cross-3} hold. Next, we prove the results for the stochastic case.

Let $\tilde\vx^1 = \vx^1$ and $\tilde\vx^{k+1}=\tilde\vx^k + \alpha_k(\vg_0^k-\tilde{\nabla}f_0(\vx^k))$ for $1\le k\le K$. Then $\EE\big[\langle \vx^k-\tilde\vx^k, \vg_0^k-\tilde{\nabla}f_0(\vx^k)\rangle\, \big|\, \cH_k\big] = 0$. Hence,
\begin{equation}\label{eq:app-eq1}
\textstyle-\sum_{k=1}^K\alpha_k \EE\big\langle \vx^k-\vx, \vg_0^k-\tilde{\nabla}f_0(\vx^k)\big\rangle = -\sum_{k=1}^K\alpha_k \EE\big\langle \tilde\vx^k -\vx, \vg_0^k-\tilde{\nabla}f_0(\vx^k)\big\rangle.
\end{equation}
In addition, by the definition of $\{\tilde\vx^k\}$, we have
\begin{align*}
\textstyle-\sum_{k=1}^K\alpha_k \langle \tilde\vx^k -\vx, \vg_0^k-\tilde{\nabla}f_0(\vx^k)\rangle
=&~ \textstyle\sum_{k=1}^K \langle \tilde\vx^k -\vx, \tilde\vx^k -\tilde\vx^{k+1}\rangle\\
&\hspace{-2cm}\textstyle~= \frac{1}{2}\left[\tilde\|\vx^1-\vx\|^2  - \|\tilde\vx^{K+1}-\vx\|^2 + \sum_{k=1}^K\|\tilde\vx^k -\tilde\vx^{k+1}\|^2\right]\\
&\hspace{-2cm}\textstyle~\le \frac{1}{2}\left[\|\vx^1-\vx\|^2 + \sum_{k=1}^K\alpha_k^2\|\vg_0^k-\tilde{\nabla}f_0(\vx^k)\|^2\right],
\end{align*} 
where we have used the fact $\tilde\vx^1 = \vx^1$. Substituting the above inequality into \eqref{eq:app-eq1} gives
\begin{equation}\label{eq:app-eq2}
\textstyle-\sum_{k=1}^K\alpha_k \EE\big[\langle \vx^k-\vx, \vg_0^k-\tilde{\nabla}f_0(\vx^k)\rangle\big] \le \frac{1}{2}\EE\left[\|\vx^1-\vx\|^2 + \sum_{k=1}^K\alpha_k^2\|\vg_0^k-\tilde{\nabla}f_0(\vx^k)\|^2\right].
\end{equation}
By Assumption \ref{assump:bd} and the fact $\EE \|\xi - \EE\xi\|^2 \le \EE\|\xi\|^2$ for any random vector $\xi$, we have $\EE\|\vg_0^k-\tilde{\nabla}f_0(\vx^k)\|^2 \le \sigma^2$, and thus \eqref{eq:app-eq2} implies \eqref{eq:bd-cross-1}.

By essentially the same arguments, we can show \eqref{eq:bd-cross-2} by noting 
$\EE\|\vh^k\|^2\le 2\beta ^2 F^2 G^2 + \frac{2G^2}{M}\EE\|\vz^k\|^2$ from \eqref{eq:bd-hk-vec}, and also we can show \eqref{eq:bd-cross-3} by noting from Assumption \ref{assump:bd} that
$$\textstyle\EE\|M\ve_{j_k}\odot \nabla_\vz \Psi(\vx^k,\vz^k)\|^2 = \EE \left|\max\left(-\frac{z_{j_k}}{\beta}, f_{j_k}(\vx^k)\right)\right|^2 \le F^2.$$

\subsection{Proof of Lemma \ref{lem:bd-cross-str}}
Denote $\Delta_\vz^k = M\ve_{j_k}\odot \nabla_\vz \Psi(\vx^k,\vz^k) - \nabla_\vz \Psi(\vx^k,\vz^k)$. Let $\tilde\vz^1 = \vz^1$ and $\tilde\vz^{k+1}=\tilde\vz^k - \rho_k\Delta_\vz^k$ for all $k\ge 1$. Then $\EE\langle \vz^k-\tilde\vz^k, \Delta_\vz^k \rangle = 0$ for any $k$. Note $\rho_k=\frac{\rho}{\log(K+1)}, \, \forall \, k$. Hence,
\begin{align*}
&\textstyle~\sum_{k=1}^K \EE\big\langle\vz^k-\vz, M\ve_{j_k}\odot \nabla_\vz \Psi(\vx^k,\vz^k) - \nabla_\vz \Psi(\vx^k,\vz^k) \big\rangle\\
=&\textstyle~\frac{\log(K+1)}{\rho}\sum_{k=1}^K \EE\big\langle\tilde\vz^k-\vz, \rho_k \Delta_\vz^k\big\rangle\\
=&\textstyle~\frac{\log(K+1)}{\rho}\sum_{k=1}^K \EE\big\langle\tilde\vz^k-\vz, \tilde\vz^k - \tilde\vz^{k+1}\big\rangle\\
=&\textstyle~\frac{\log(K+1)}{2\rho}\EE\left[\|\tilde\vz^1-\vz\|^2 - \|\tilde\vz^{K+1}-\vz\|^2 + \sum_{k=1}^K\|\tilde\vz^k - \tilde\vz^{k+1}\|^2\right]\\
=&\textstyle~\frac{\log(K+1)}{2\rho}\EE\left[\|\vz^1-\vz\|^2 - \|\tilde\vz^{K+1}-\vz\|^2 + \sum_{k=1}^K \rho_k^2 \|\Delta_\vz^k\|^2\right],
\end{align*}
where we have used $\tilde\vz^1=\vz^1$. Since $\EE\|\Delta_\vz^k\|^2 \le \EE\|M\ve_{j_k}\odot \nabla_\vz \Psi(\vx^k,\vz^k)\|^2 = \frac{1}{\rho_k^2}\EE\|\vz^{k+1}-\vz^k\|^2$, we have \eqref{eq:bd-cross-str-3} from the above inequality.

}

\bibliographystyle{abbrv}

\end{document}